\Crefname{ALC@unique}{Line}{Lines}
\setlist[enumerate]{leftmargin=.5in}
\setlist[itemize]{leftmargin=.5in}
\newcommand{\term}[1]{\textbf{#1}}
\newcommand{\tech}[1]{\textsf{#1}}
\newcommand{\op}[1]{\ensuremath{\operatorname{#1}}}
\newcommand{\zmat}[2]{\ensuremath{M_{#1 \times #2}(\Z)}}
\newcommand{\bolda}{\ensuremath{\mathbf{a}}}
\newcommand{\boldb}{\ensuremath{\mathbf{b}}}
\newcommand{\boldc}{\ensuremath{\mathbf{c}}}
\newcommand{\boldp}{\ensuremath{\mathbf{p}}}
\newcommand{\boldt}{\ensuremath{\mathbf{t}}}
\newcommand{\boldv}{\ensuremath{\mathbf{v}}}
\newcommand{\boldx}{\ensuremath{\mathbf{x}}}
\newcommand{\boldy}{\ensuremath{\mathbf{y}}}
\newcommand{\boldz}{\ensuremath{\mathbf{z}}}
\newcommand{\boldalpha}{\boldsymbol{\alpha}}
\newcommand{\boldbeta}{\boldsymbol{\beta}}
\newcommand{\boldomega}{\boldsymbol{\omega}}
\newcommand{\boldone}{\boldsymbol{1}}
\newcommand{\boldzero}{\boldsymbol{0}}
\newcommand{\Z}{\mathbb{Z}}
\newcommand{\R}{\mathbb{R}}
\newcommand{\Q}{\mathbb{Q}}
\newcommand{\C}{\mathbb{C}}
\newcommand{\CP}{\mathbb{CP}}
\newcommand{\V}{\mathcal{V}}
\newcommand{\imag}{\mathfrak{i}}
\newcommand{\inner}[2]{ \ensuremath{\left\langle {#1}, {#2}\right\rangle} }
\DeclareMathOperator{\evol}{vol}
\DeclareMathOperator{\mvol}{MV}
\DeclareMathOperator{\rank}{rank}
\DeclareMathOperator{\conv}{conv}
\DeclareMathOperator{\supp}{supp}
\DeclareMathOperator{\newt}{newt}
\newenvironment{smallbmatrix}{\left[\begin{smallmatrix}}{\end{smallmatrix}\right]}
\crefname{example}{example}{examples}
\Crefname{example}{Example}{Examples}
\title{
    A stratified polyhedral homotopy method for sampling positive-dimensional zero sets of polynomial systems%
    \thanks{Submitted to the editors DATE.
        \funding{%
            The author is supported, in part, by 
            the National Science Foundation under grant nos. 1923099 and 2318837,
            Auburn University at Montgomery through the Grant-in-Aid program,
            and the AMS-Simons Research Enhancement Grant for Primarily Undergraduate Institution Faculty
        }
    }
    \\[0.5ex]
    {\normalfont \footnotesize \sffamily  \textcolor{black}{In memory of Professor Tien-Yien Li}}    
}
\author{Tianran Chen%
    \thanks{Department of Mathematics,
        Auburn University at Montgomery,
        Montgomery, Alabama USA
        (\email{ti@nranchen.org})
    }
}
\begin{document}

% \dedicatory{In loving memory of Tien-Yien Li.}

\maketitle

\begin{abstract}
    Numerical algebraic geometry revolves around the study of
    solutions to polynomial systems via numerical methods.
    The polyhedral homotopy of Huber and Sturmfels
    for computing isolated solutions
    and the concept of witness sets
    as numerical representations of non-isolated solution components,
    put forth by Sommese and Wampler,
    are two pillars of this field.
    In this paper,
    we show that a modified polyhedral homotopy places the computation of isolated solutions and sample sets for non-isolated components into a single, unified framework.
    In certain cases, this method also leads to a natural decomposition of
    the BKK bound into a sum of its local contributions.
\end{abstract}

\begin{keywords}
    Polyhedral homotopy, witness sets, numerical algebraic geometry, homotopy continuation
\end{keywords}

\begin{AMS}
    14Q15, 14Q65, 65H10, 65H14, 65H20
\end{AMS}

%===============================================================================
\section{Introduction}\label{sec: introduction}

Polynomial systems are at the heart of many scientific applications,
as computational problems often reduce to algebraic equations.
In recent decades, \emph{homotopy methods},
which offer unmatched scalability,
have revolutionized the computation of \emph{all} solutions to polynomial systems
\cite{ChenLi2015Homotopy,Li2003Numerical,SommeseWampler2005Numerical}. 
These methods work by continuously deforming a hard system into an easy one,
so that its solutions trace out smooth paths that
lead back to the target solutions,
allowing us to find all solutions efficiently by tracking these paths.

Among them, the \emph{polyhedral homotopy} of B. Huber and B. Sturmfels
\cite{HuberSturmfels1995Polyhedral}, developed in the 1990s,
is of particular importance,
as it optimally exploits combinatorial structures encoded in polynomial systems.
Around the same time, the seminal work by A. Sommese and C. Wampler
\cite{SommeseWampler1996Numerical} 
opened up a new frontier by allowing
non-isolated (a.k.a. positive-dimensional) solution sets
to be computed and manipulated as first-class objects through homotopy methods.
This idea matured in \cite{SommeseVerschelde2000Numerical}
and sparked a series of works
\cite{SommeseVersheldeWampler2001Numerical,SommeseVerscheldeWampler2001Monodromy,SommeseVerscheldeWampler2002Method,sommese_symmetric_2002}
that form the foundation of \emph{numerical algebraic geometry}.
In the ensuing years, polyhedral homotopy and numerical algebraic geometry
developed separately with minimal interactions.%
\footnote{%
    Notable exceptions include
    the polyhedral bootstrapping process used in the
    original ``cascade of homotopies'' in \cite{SommeseVerschelde2000Numerical,Verschelde1999PHCpack},
    the works from the group led by J. Verschelde
    \cite{AdrovicVerschelde2013Polyhedral,Verschelde2009Polyhedral}
    in which local Puiseux series representations of positive-dimensional zero sets
    are computed through polyhedral-like homotopy methods,
    as well as an unpublished program by T.-L. Lee
    for computing individual witness sets using \tech{HOM4PS-2.0}.}
The main goal of this paper is to demonstrate that
these two seemingly disconnected ideas are in fact
the two sides of the same coin.
Indeed, with a simple modification,
the polyhedral homotopy computes sample sets of
positive-dimensional solution components.

\subsection{Combining two homotopy approaches}
In this paper, we present a ``stratified'' version of the polyhedral homotopy
for sampling positive-dimensional solution sets defined by an
unmixed Laurent polynomial system with the following key advantages
\begin{enumerate}[leftmargin=7ex]
	\item 
		The number of paths is the Bernshtein-Kushnirenko-Khovanskii bound,
		whereas the complexity of the traditional approach is only bounded by
        the B\'ezout bounds.
	\item 
		This homotopy preserves the monomial structure,
		which is of particular importance in many problems originating from 
		science and engineering where the monomial structure imposes
		additional constraints that are crucial for specific applications.
	\item
		A single homotopy is used to sample components of all dimensions,
		including isolated solutions,
        and sample sets for non-isolated solution components
        are produced as by-products from the process of computing isolated solutions
        with minimal overhead.
\end{enumerate}

\subsection{Decomposition of BKK bound}

Bernshtein's theorem~\cite{Bernshtein1975Number}
states that for a Laurent polynomial system $F = (f_1,\ldots,f_n)$ in $n$ variables,
the number of isolated zeros in $(\C^*)^n = (\C \setminus \{0\})^n$
is bounded by the mixed volume of
the Newton polytopes $P_1,\ldots,P_n$ of the system.
It equals the normalized volume $n!\evol_n(P)$ if $P_1 = \cdots = P_n$
(i.e., the unmixed case established by Kushnirenko \cite{Kushnirenko1975Newton}).
This is the Bernshtein-Kushnirenko-Khovanskii (BKK) bound.
For generic coefficients, all zeros in $(\C^*)^n$ will be isolated,
and this bound will be exact.

However, if the zero set of $F$ in $(\C^*)^n$
contains positive-dimensional components,
then the number of isolated zeros in $(\C^*)^n$
must be less than the BKK bound \cite{Bernshtein1975Number}.
A natural question to ask is
if it is possible to decompose the BKK bound
as a sum of local contributions from each isolated zero
and the positive-dimensional components.

This mirrors the deep question of
decomposing the B\'ezout number
into local contributions from subvarieties
that is at the heart of intersection theory
\cite{Lazarsfeld1981Excess}.
The stratified polyhedral homotopy method
proposed in this paper will provide a homotopy-based answer.

\subsection{A motivating example}

We start with a simple motivating example.

\begin{example}\label{ex: running}
    Consider a trivial example of a polynomial system $F(x_1,x_2)$, given by
    \[
        % F(x_1,x_2) =
        \left\{
            \begin{aligned}
                (x_1^2 + x_2^2 - 9)(x_1 + x_2 - 3) &=
                x_1^3 + x_1^2 x_2 - 3x_1^2 + x_1 x_2^2 + x_2^3 - 3x_2^2 - 9x_1 - 9x_2 + 27 \\
                (x_1^2 + x_2^2 - 9)(x_1 - x_2 - 1) &=
                x_1^3 - x_1^2 x_2 - 1x_1^2 + x_1 x_2^2 - x_2^3 - 1x_2^2 - 9x_1 + 9x_2 + 9.
            \end{aligned}
        \right.
    \]
    Its complex zero set consists of two components:
    a 1-dimensional component $V_1$ defined by $x_1^2 + x_2^2 - 9 = 0$
    and a 0-dimensional (i.e., isolated) nonsingular component $V_0$
    at $\boldx^{(0)} = (x_1,x_2) = (2,1)$.
    When the standard polyhedral homotopy method (see \Cref{sec: polyhedral homotopy}) is applied,
    the nonsingular isolated zero $\boldx^{(0)}$ can be obtained.
    That is, the polyhedral homotopy defines solution paths,
    one of which reaches $\boldx^{(0)}$.
    With minor modifications, which the rest of this paper will detail,
    the polyhedral homotopy method can also produce a
    ``numerically well-behaved'' sample point from $V_1$.
    We consider the ``rank-1'' perturbation
    \[
        G(x_1,x_2) =
        \left\{
            \begin{aligned}
                c_{11}' x_1^3 + c_{12}' x_1^2 x_2 &+ c_{13}' x_1^2 + c_{14}' x_1 x_2^2 + c_{15}' x_2^3 + c_{16}' x_2^2 + c_{17}' x_1 + c_{18}' x_2 + c_{19}' \\
                c_{21}' x_1^3 + c_{22}' x_1^2 x_2 &+ c_{23}' x_1^2 + c_{24}' x_1 x_2^2 + c_{25}' x_2^3 + c_{26}' x_2^2 + c_{27}' x_1 + c_{28}' x_2 + c_{29}' ,
            \end{aligned}
        \right.
    \]
    which is derived from the target system $F$ by replacing the coefficient matrix with
    \[
        \left[
        \begin{smallmatrix}
            c_{11}' & c_{12}' & c_{13}' & c_{14}' & c_{15}' & c_{16}' & c_{17}' & c_{18}' & c_{19}' \\
            c_{21}' & c_{22}' & c_{23}' & c_{24}' & c_{25}' & c_{26}' & c_{27}' & c_{28}' & c_{29}' \\
        \end{smallmatrix}
        \right]
        =
        \left[
        \begin{smallmatrix}
            1 &  1 & -3 & 1 &  1 & -3 & -9 & -9 & 27 \\
            1 & -1 & -1 & 1 & -1 & -1 & -9 &  9 & 9 \\
        \end{smallmatrix}
        \right]
        +
        \left[
        \begin{smallmatrix}
            c_{11}^* & c_{12}^* & c_{13}^* & c_{14}^* & c_{15}^* & c_{16}^* & c_{17}^* & c_{18}^* & c_{19}^* \\
            c_{21}^* & c_{22}^* & c_{23}^* & c_{24}^* & c_{25}^* & c_{26}^* & c_{27}^* & c_{28}^* & c_{29}^* \\
        \end{smallmatrix}
        \right]
    \]
    \vspace{1.5ex}

\opencutleft
\renewcommand\windowpagestuff{\includegraphics[width=0.9\linewidth]{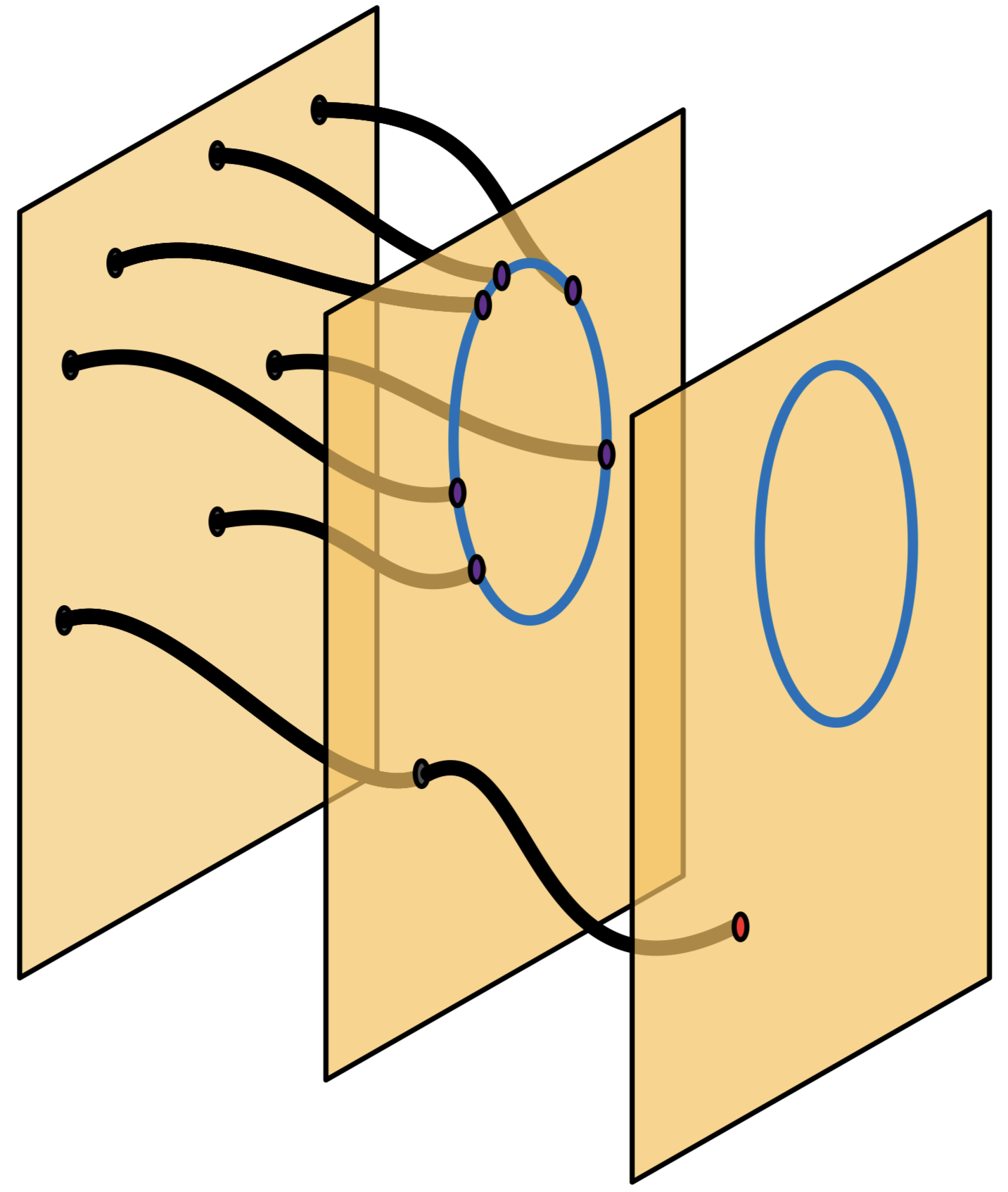}}
    \begin{cutout}{0}{5pt}{0.75\linewidth}{10}
        \noindent
        where $C^* = [ c_{ij}^* ]$ is a generic complex matrix of rank 1.
        That is, we modify the coefficient matrix with a generic rank-1 perturbation.
        % Then for almost all choices of such perturbation matrix $C^*$,
        Then among the isolated complex zeros of $G$,
        at least one is also contained in $V_1$ (the circle).
        These zeros depend on the choice of the generic perturbation $C^*$,
        but, regardless of the choice, this zero can serve as a ``numerically well-behaved''
        sample point of $V_1$ in the sense that it will be both
        a nonsingular zero of $G$ and a smooth point in $V_1$.
        We will define a modified polyhedral homotopy $H(x_1,x_2,s)$,
        which we will call a ``stratified'' polyhedral homotopy,
        such that $H(x_1,x_2,\frac{1}{2}) \equiv G(x_1,x_2)$
        and $H(x_1,x_2,0) \equiv F(x_1,x_2)$.
        Some of the solution paths defined by $H(x_1,x_2,s) = \boldzero$
        in $\C^2 \times [0,1]$ will reach sample points in $V_1$ at $s=\frac{1}{2}$
        and the isolated point $\boldx^{(0)}$ at the end point $s=0$.
        In other words, the sample point $\boldx^{(1)}$
        for the 1-dimensional solution component $V_1$
        is produced as a by-product of the process of computing the isolated solution $\boldx^{(0)}$.
        % will define solution paths that will, collectively, reach both $\boldx^{(1)}$ and $\boldx^{(0)}$.
        The cartoon illustration on the left shows the homotopy paths at
        $s=0,s=\frac{1}{2}$, and $s=1$,
        passing through sample points of $V_1$ (blue)
        and the isolated point $V_0$ (red).
    \end{cutout}
\end{example}

%------------------------------------------------------------------------------
\subsection{Related works}

Our approach relates most closely to
the ``cascade of homotopies'' method developed by
A. Sommese and J. Verschelde \cite{SommeseVerschelde2000Numerical,Verschelde1999PHCpack}
for computing witness sets of positive-dimensional solution components,
the homotopy method developed by W. Zulehner \cite{Zulehner1989Solutions}
for finding one point on each connected component of
the complex zero set of a polynomial system,
and the stronger version developed by
D. Bates, D. Eklund, J. Hauenstein, and C. Peterson
\cite{BatesEklundHauensteinPeterson2021Excess}
for targeting the more refined structure known as isosingular sets.
A key advantage of our method is that,
for unmixed Laurent polynomial systems,
its complexity is linear in the BKK (Kushnirenko) bound of the system itself.

Similar to the ``twisted Chow form'' and ``toric perturbation''
developed by M. Rojas~\cite{Rojas1999Solving},
the proposed homotopy method also accelerates
the computation of positive-dimensional zero sets
by exploiting the combinatorial structure encoded in the
Newton polytope of the defining polynomial system.
The primary distinction is that our approach is homotopy-based,
whereas Rojas' is resultant-based.

Ultimately, what distinguishes this work is its simplicity.
With a minor tweak to the classical polyhedral homotopy,
the proposed method produces sample sets of positive-dimensional components
as a natural by-product from the process of computing isolated solutions.

%--- Paper structure ----------------------------------------------------------
\subsection{Organization}

In the rest of this paper, we will describe the construction
of this stratified polyhedral homotopy
and outline the theoretical underpinnings.
To be self-contained, \Cref{sec: preliminaries} will review
notations, concepts, and important theoretical ingredients.
\Cref{sec: standard unmixed case} develops the basic construction of a
stratified polyhedral homotopy method for sampling positive-dimensional solution sets
of an unmixed Laurent polynomial system.
General cases are considered in \Cref{sec: general cases}.
In \Cref{sec: BKK decomposition},
we explain how this homotopy method can, in certain cases,
produce a decomposition of the BKK bound
into local contributions from all components
as a by-product.
A few concrete examples are studied in \Cref{sec: examples}.
We conclude with a few remarks in \Cref{sec: conclusion}.
Technical details of a few well-known algorithms for bootstrapping the polyhedral homotopy method
are included in the appendix (\Cref{appendix: bootstrapping}) for completeness.

%===============================================================================
\section{Preliminaries}\label{sec: preliminaries}

Let $\zmat{n}{m}$ be the set of $n \times m$ integer matrices.
A matrix $U \in \zmat{n}{n}$ is \emph{unimodular} if
$U^{-1} \in \zmat{n}{n}$.
Each $A \in \zmat{n}{m}$ has a \emph{Smith Normal Form}:
there are unimodular 
$P \in \zmat{n}{n}, Q \in \zmat{m}{m}$ so that
$PAQ = \operatorname{diag}(d_1,\dots,d_r,0,\dots,0)$,
where $r = \op{rank} A$, and positive integers $d_1 \mid d_2 \mid \cdots \mid d_r$
are the \emph{invariant factors} of $A$.

For $\boldx=(x_1,\dots,x_n)$ and 
$\boldalpha = (\alpha_1,\dots,\alpha_n)^\top \in \Z^n$,
$
    \boldx^{\boldalpha} =
    x_1^{\alpha_1} \, \cdots \, x_n^{\alpha_n}
$
is a \emph{Laurent monomial}.
Similarly, for $A = \begin{bmatrix} \boldalpha^{(1)} & \cdots & \boldalpha^{(m)} \end{bmatrix} \in M_{n \times m}(\Z)$
the notation 
$
    \boldx^A =
    ( \boldx^{\boldalpha^{(1)}}, \dots, \boldx^{\boldalpha^{(m)}} )
$
describes a system of Laurent monomials.
It is natural to restrict the domain to
the \emph{algebraic torus} $(\C^*)^n = (\C \setminus \{0\})^n$,
which has a natural group structure given by componentwise multiplication.
A matrix $A \in \zmat{n}{m}$ induces a group homomorphism 
$\boldx \mapsto \boldx^A$ from $(\C^*)^n$ to $(\C^*)^m$,
which is also complex holomorphic.
If $A \in \zmat{n}{n}$ is unimodular,
then the map $\boldx \mapsto \boldx^A$ 
is an automorphism of the group $(\C^*)^n$, and it is also a bi-holomorphic map.

A \emph{Laurent binomial} in $\boldx = (x_1,\ldots,x_n)$
is an expression of the form
$c_1 \boldx^{\boldalpha} + c_2 \boldx^{\boldbeta}$
where
$\boldalpha,\boldbeta \in \Z^n$, and $c_1,c_2 \in \C^*$.
Without altering its zero set in $(\C^*)^n$, the equation
$c_1 \boldx^{\boldalpha} + c_2 \boldx^{\boldbeta} = 0$ can be rewritten as
$\boldx^{\boldalpha - \boldbeta} = -c_2/c_1$.
A \emph{Laurent binomial system} is a system of the form
$(\boldx^{\bolda^1}, \ldots, \boldx^{\bolda^m}) = (b_1,\dots,b_m)$
where $\bolda^i \in \Z^n$ and $b_i \in \C^*$ for $i=1,\dots,m$.
Using the matrix exponent notation, it can be written as
$\boldx^A = \boldb$
where the integer matrix $A \in M_{n \times m}(\Z)$ collects the exponents
and the row vector $\boldb \in (\C^*)^m$
collects all the constants.
\begin{lemma}
    \label{lem:binomial-roots}
    For a matrix $A \in \zmat{n}{n}$ and any $\boldb \in (\C^*)^n$,
    all isolated solutions of Laurent binomial system $\boldx^A = \boldb$ 
    are nonsingular, and the total number is
    $| \det A |$.
\end{lemma}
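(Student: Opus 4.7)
The plan is to reduce the general binomial system to a diagonal one via the Smith Normal Form recalled earlier in the section, solve the diagonal case by inspection, and then transfer the count and nonsingularity back through the reduction.

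First I would split on whether $\det A = 0$. If $A$ is singular, then $\boldx \mapsto \boldx^A$ is a group homomorphism of $(\C^*)^n$ whose image is a proper subtorus; either $\boldb$ lies outside this subtorus (so there are no solutions), or the solution set is a coset of the positive-dimensional kernel, hence has no isolated points. In either case the number of isolated solutions is $0 = |\det A|$, and the claim holds vacuously.

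Next, assuming $\det A \neq 0$, I would invoke the Smith Normal Form to write $PAQ = D = \diag(d_1,\dots,d_n)$ with $d_i \geq 1$, where $P,Q \in \zmat{n}{n}$ are unimodular. Using the identity $(\boldx^M)^N = \boldx^{MN}$ together with the fact that $\boldy \mapsto \boldy^{P^{-1}}$ and $\boldb \mapsto \boldb^Q$ are bi-holomorphic automorphisms of $(\C^*)^n$ (because $P^{-1}$ and $Q$ are unimodular), the system $\boldx^A = \boldb$ is equivalent, under the change of variables $\boldy = \boldx^{P^{-1}}$ and the substitution $\boldc = \boldb^Q \in (\C^*)^n$, to the diagonal system
\[
    y_1^{d_1} = c_1, \; \dots, \; y_n^{d_n} = c_n.
\]
Each equation has exactly $d_i$ simple roots in $\C^*$, so the diagonal system has $d_1 \cdots d_n = |\det D| = |\det A|$ solutions, each nonsingular since its Jacobian $\diag(d_1 y_1^{d_1-1},\dots,d_n y_n^{d_n-1})$ has nonvanishing determinant at any point of $(\C^*)^n$. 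Because the change of variables is a bi-holomorphic group automorphism, it preserves both the solution count and the nonsingularity property when pulling back to the original $\boldx^A = \boldb$.

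I do not anticipate a genuine obstacle. The only thing to be careful about is the bookkeeping of exponent laws for matrix-valued exponents, and in particular the verification that $(\boldx^M)^N = \boldx^{MN}$ in the form needed to conjugate $A$ by unimodular matrices on both sides; once this is in place, the argument is essentially a translation of the Smith Normal Form into the language of monomial maps.
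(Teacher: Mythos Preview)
Your argument is correct and is the standard proof via Smith Normal Form. The paper does not actually supply a proof of this lemma; it is stated as a known preliminary fact, so there is nothing to compare against beyond noting that your route is exactly the one implicitly suggested by the surrounding discussion of Smith Normal Form and monomial automorphisms.
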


A \emph{Laurent polynomial} is a linear combination of Laurent monomials, i.e.,
an expression of the form $f = \sum_{k = 1}^m c_k \boldx^{\boldalpha^{(k)}}$
where each $c_k \in \C^*$ and $\boldalpha^{(k)} \in \Z^n$.
Here, the set $\supp(f) := \{ \boldalpha^1,\dots,\boldalpha^m \} \subset \Z^n$
is known as the \emph{support} of $f$.
Its convex hull $\newt(f) := \conv(\supp(f))$
is the \emph{Newton polytope} of $f$.
A \emph{Laurent polynomial system} is a system $F = (f_1,\dots,f_q)$
of Laurent polynomials in $n$ variables.
Its common zero sets in $(\C^*)^n$ and $\C^n$ 
are denoted by $\V^*(F)$ and $\V(F)$, respectively.
They are equipped with rich structures of 
\emph{very affine} and \emph{affine} varieties, respectively.
If nonempty, they are composed of \emph{irreducible components},
each with a well-defined dimension.
The unions of their $d$-dimensional components
are denoted by $\V_d^*(F)$ and $\V_d(F)$, respectively.
Kushnirenko's Theorem and Bernshtein's First Theorem
provide us the exact formulae for the maximum number of points in $\V_0^*(F)$.

\begin{theorem}[Kushnirenko \cite{Kushnirenko1975Newton}]
    \label{thm:kushnirenko}
    For a Laurent polynomial system $F = (f_1,\dots,f_n)$ in
    $\boldx = (x_1,\dots,x_n)$ with identical support
    $S = \supp(f_i)$ for $i=1,\dots,n$,
    %if the supports of $p_1,\dots,p_n$ are identical,
    %i.e., $\supp(p_1) = \cdots = \supp(p_n) =: S$,
    $|\V_0^*(F)| \le n! \evol_n (\conv(S))$.
\end{theorem}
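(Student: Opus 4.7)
The plan is to establish the bound via the polyhedral homotopy construction itself, using a generic coefficient perturbation together with a regular triangulation of $\conv(S)$. First, I would reduce to the generic coefficient case: by the lower semicontinuity of the number of isolated zeros under coefficient specialization, it suffices to bound $|\V_0^*(F)|$ when the coefficients of $F$ are in general position in coefficient space, and the bound for arbitrary coefficients then follows by passing to a specializing limit.

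Next, pick a generic lifting $\omega \colon S \to \R$ that induces a regular (coherent) triangulation $\mathcal{T}$ of $\conv(S)$ whose vertex set is contained in $S$. Form the one-parameter deformation $F_t(\boldx) = \bigl( \sum_{\boldalpha \in S} c_{i,\boldalpha} \, t^{\omega(\boldalpha)} \boldx^{\boldalpha} \bigr)_{i=1}^n$ so that $F_1 = F$. For each maximal $n$-simplex $\sigma \in \mathcal{T}$ with vertices $\boldalpha^{(0)}, \ldots, \boldalpha^{(n)} \in S$, perform the toric change of variables $\boldx \mapsto t^{-\boldv_\sigma} \boldx$, where $\boldv_\sigma \in \Q^n$ is the gradient of the lower-hull facet associated with $\sigma$. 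This rescaling normalizes the $t$-exponents so that the limit $t \to 0^+$ (after cancellation of a common power of $t$ per equation) yields a square Laurent binomial system with exponent matrix $A_\sigma = [\boldalpha^{(1)} - \boldalpha^{(0)} \mid \cdots \mid \boldalpha^{(n)} - \boldalpha^{(0)}]$ and a nonvanishing right-hand side in $(\C^*)^n$.

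By \Cref{lem:binomial-roots}, each such binomial system contributes exactly $|\det A_\sigma| = n!\, \evol_n(\sigma)$ nonsingular solutions in $(\C^*)^n$, each of which serves as the start of a homotopy path that, traced from $t = 0^+$ back to $t = 1$, reaches an isolated zero of $F$ in $(\C^*)^n$. Summing the contributions over all $n$-simplices of $\mathcal{T}$ yields $|\V_0^*(F)| \le \sum_{\sigma \in \mathcal{T}} n!\, \evol_n(\sigma) = n!\, \evol_n(\conv(S))$, which is the asserted bound.

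The main obstacle is the rigorous justification of two points that the sketch glosses over. First, I must verify that every isolated zero of the generic $F$ in $(\C^*)^n$ is in fact the endpoint of at least one of the homotopy paths constructed above; this is standard but requires a parameter-homotopy / algebraic-specialization argument, exploiting that the set of coefficient tuples for which all zeros lie at path endpoints is Zariski-open and nonempty. Second, I must rule out path divergence to the boundary of the torus or to infinity for $t \in (0, 1]$, which is the technical core of Bernshtein-Kushnirenko theory and is typically handled through a toric compactification associated to $\newt(f_1)$ in which the $t \to 0^+$ behavior is governed precisely by the simplices of $\mathcal{T}$ selected by $\omega$.
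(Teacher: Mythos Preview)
The paper does not actually prove \Cref{thm:kushnirenko}. It is stated in the preliminaries section as a classical result, with a citation to Kushnirenko's original paper, and is used only as background for the BKK bound discussion. There is therefore no ``paper's own proof'' to compare your proposal against.

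That said, your sketch is the standard Huber--Sturmfels polyhedral homotopy argument and is essentially correct in outline. One small imprecision: the $t\to 0^+$ limit of the rescaled system is not literally a Laurent \emph{binomial} system. Each equation in the limit retains the $n+1$ terms supported on the vertices of the simplex $\sigma$; this is what the paper's appendix calls a \emph{simplex system}. The count $|\det A_\sigma| = n!\,\evol_n(\sigma)$ is still correct, but it comes in two steps: after dividing through by the monomial $\boldx^{\boldalpha^{(0)}}$, one obtains a \emph{linear} system in the new variables $y_j = \boldx^{\boldalpha^{(j)}-\boldalpha^{(0)}}$, which has a unique solution $\boldy^*\in(\C^*)^n$ for generic coefficients, and then the binomial system $\boldx^{A_\sigma} = \boldy^*$ is solved via \Cref{lem:binomial-roots}. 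Your final paragraph correctly identifies the two genuine technical obligations (accessibility of all isolated zeros via parameter homotopy, and control of divergent paths via toric compactification), and these are indeed where the real work lies.
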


\begin{theorem}[Bernshtein's First Theorem \cite{Bernshtein1975Number}]
    \label{thm:bernshtein-a}
    For a Laurent polynomial system $F = (f_1,\dots,f_n)$ in the variables
    $x_1,\dots,x_n$,
    $|\V_0^*(F)| \le \mvol (\newt(f_1),\dots,\newt(f_n))$.
\end{theorem}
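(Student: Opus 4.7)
My plan is to prove the bound via the Huber--Sturmfels polyhedral homotopy construction, which will also serve as a natural entry point into the constructions of later sections. First, I choose a generic lifting $\omega_i : \supp(f_i) \to \R$ for each $i = 1, \dots, n$ and form the one-parameter family
\[
    H(\boldx,t) \;=\; \left( \sum_{\boldalpha \in \supp(f_i)} c_{i,\boldalpha}\, t^{\omega_i(\boldalpha)}\, \boldx^{\boldalpha} \right)_{i=1}^n,
\]
so that $H(\boldx,1) = F(\boldx)$. The combined lifting $\omega = (\omega_1,\dots,\omega_n)$ induces a coherent mixed subdivision of the Minkowski sum $\newt(f_1) + \cdots + \newt(f_n)$; its full-dimensional mixed cells will index the distinct asymptotic regimes of the solution paths of $H$ as $t \to 0^+$.

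Next, for each path $\boldx(t) \in (\C^*)^n$, I would insert a Puiseux ansatz $\boldx(t) = t^{\boldbeta}(\boldy_0 + O(t^{1/N}))$ with $\boldbeta \in \Q^n$ and $\boldy_0 \in (\C^*)^n$, then extract the lowest-order terms in $t$. Genericity of $\omega$ forces the surviving monomials in each $h_i$ to be the vertex pair of $\newt(f_i)$ selected by the inner normal $(\boldbeta,1)$ on the lower hull of the lifted polytope; collectively these vertex pairs form a mixed cell $C = C_1 + \cdots + C_n$ with $|C_i| = 2$, and $\boldy_0$ satisfies a Laurent binomial system $\boldy^{A_C} = \boldb_C$ whose exponent matrix $A_C$ is built from the primitive edge directions of $C$. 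By \Cref{lem:binomial-roots}, this yields $|\det A_C| = n! \evol_n(C)$ nonsingular contributions per mixed cell.

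Summing over mixed cells and invoking the defining combinatorial identity of coherent mixed subdivisions then gives
\[
    \sum_{C \text{ mixed}} n! \evol_n(C) \;=\; \mvol(\newt(f_1), \dots, \newt(f_n)),
\]
which I would identify as an upper bound on the number of homotopy paths, and hence on $|\V_0^*(F)|$.

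The main obstacle is the \emph{properness / no-path-loss} step: one must show that every isolated zero of $F = H(\cdot,1)$ is the $t = 1$ endpoint of at least one path admitting the Puiseux expansion above with $\boldy_0 \in (\C^*)^n$, and that no path escapes to the boundary of $(\C^*)^n$ for some intermediate $t \in (0,1)$. This is where the genericity hypothesis does the real work, eliminating the coincidences in $\omega$ (and in the coefficients $c_{i,\boldalpha}$) that would allow paths to merge, diverge, or acquire a zero coordinate. I would handle this by compactifying the graph of $H$ inside the toric variety associated with the Minkowski sum, where properness of the projection to the $t$-line becomes a standard Bernshtein-style statement, and then transferring the $t = 0^+$ count upward to $t = 1$ by upper semicontinuity of fiber cardinality. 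Once properness is in hand, the chain of inequalities $|\V_0^*(F)| \le \#\{\text{paths}\} \le \sum_C n! \evol_n(C) = \mvol(\newt(f_1),\dots,\newt(f_n))$ closes the argument.
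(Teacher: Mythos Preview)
The paper does not prove \Cref{thm:bernshtein-a}; it is stated in \Cref{sec: preliminaries} purely as background, with a citation to Bernshtein's original 1975 paper, and no argument is given. There is therefore nothing in the paper to compare your proposal against.

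That said, your outline is essentially the Huber--Sturmfels polyhedral homotopy proof of the BKK bound, and its overall shape is correct: generic lifting, coherent mixed subdivision, Puiseux analysis of branches as $t \to 0^+$, binomial count per mixed cell via \Cref{lem:binomial-roots}, and the combinatorial identity summing cell volumes to the mixed volume. You correctly identify properness as the crux. One subtlety worth flagging: because you keep the \emph{original} coefficients $c_{i,\boldalpha}$ of $F$ (rather than generic ones) in $H$, you cannot assume the family is generically smooth with exactly $\mvol$ zeros for intermediate $t$; some facial system of $F$ could vanish, allowing branches to escape to the toric boundary as $t \to 0^+$. The inequality you actually need runs in the safe direction---isolated zeros of $F$ at $t=1$ are endpoints of branches, and the number of branches with finite $(\C^*)^n$-limits as $t \to 0^+$ is at most $\mvol$---so the bound still closes, but your phrase ``properness \dots becomes a standard Bernshtein-style statement'' is hiding exactly the facial-system analysis that \emph{is} Bernshtein's second theorem. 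If you want a self-contained proof that does not presuppose Bernshtein's work, that step needs to be made explicit rather than deferred.
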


Here, $\mvol(P_1,\ldots,P_n)$ is the \emph{mixed volume}
of the convex polytopes $P_1,\ldots,P_n$,
and it is defined to be the coefficient of the monomial $\lambda_1\cdots\lambda_n$
in the volume of the \emph{Minkowski sum}
$\lambda_1 P_1 + \cdots + \lambda_n P_n$,
which is a homogeneous polynomial in $\lambda_1,\ldots,\lambda_n$.
The upper bounds given by both theorems are \emph{sharp}
in the sense that they hold with equality
for generic coefficients.
They have since been called the
Bernshtein-Kushnirenko-Khovanskii (BKK) bounds.

In the following subsections,
we briefly review the four main theoretical ingredients
from which we will develop the stratified polyhedral homotopy method.
Our review is by no means comprehensive,
and we refer to standard texts in this field
for thorough exposition.

%===============================================================================
\subsection{Polyhedral homotopy}\label{sec: polyhedral homotopy}

In their seminal work \cite{HuberSturmfels1995Polyhedral},
Huber and Sturmfels introduced the \emph{polyhedral homotopy} method
for computing \emph{all} isolated $\C^*$-zeros of Laurent polynomial systems
that can optimally exploit their monomial structure.\footnote{%
    In a parallel development, 
    a recursive homotopy method that can also
    take advantage of the Newton polytope structure
    to solve Laurent polynomial systems
    was proposed by J. Verschelde, P. Verlinden, and R. Cools
    around the same time \cite{VerscheldeVerlindenCools1994Homotopies}.
    This recursive homotopy method has also been referred to
    as \emph{polyhedral homotopy} in some papers.
    The present paper, however, only focuses on extending the
    polyhedral homotopy method of B. Huber and B. Sturmfels \cite{HuberSturmfels1995Polyhedral},
    which will be referred to simply as \emph{the polyhedral homotopy} hereafter.
}

For a \emph{square} Laurent system $F = (f_1, \ldots, f_n)$
in $\boldx = (x_1, \ldots, x_n)$ given by
\[
    f_i(\boldx) =
    \sum_{\bolda \in S_i}
        c_{i,\bolda}
        \boldx^{\bolda},
    \quad\text{for } i = 1, \dots, n,
\]
we select generic coefficients $c_{i,\bolda}^*$
for each pair of $i \in \{1,\ldots,n\}$ and $\bolda \in S_i$
and \emph{lifting functions} $\omega_i : S_i \to \Q^+$
with generic images for $i=1,\ldots,n$.
Among many variations,
the numerically stable formulation for
the polyhedral homotopy of Huber and Sturmfels
can be described as the homotopy function
$H = (h_1, \dots, h_n) : (\C^*)^n \times [0,1]^2 \to \C^n$
given by
\begin{equation}\label{equ: 2-step polyhedral exponential}
    h_i(\boldx, t_0, t_1) =
    \sum_{\bolda \in S_i}
        [t_1 c^*_{i,\bolda} + (1-t_1)c_{i,\bolda}]
        \boldx^{\bolda}
        e^{-M \omega_i(\bolda) t_0}
    \quad
    \text{for } i =  1, \ldots, n,
\end{equation}
where $M \in \R^+$ is determined by the
Newton polytopes of $h_1,\ldots,h_n$.
This numerically stable variant is different from the original formulation
by Huber and Sturmfels \cite{HuberSturmfels1995Polyhedral}
and was proposed by S. Kim and M. Kojima \cite{KimKojima2004Numerical}
and, independently, by T.-L. Lee, T.-Y. Li, and C.-H. Tsai
\cite{LeeLiTsai2008HOM4PS-2.0}.
This formulation will be referred to as
\emph{the classical polyhedral homotopy}.

Clearly, $H$ is continuous and $H(\boldx,0,0) \equiv F(\boldx)$.
Moreover, along any given smooth path $(t_0(s),t_1(s))$
in the parameter space $(0,1)^2$, under the genericity assumption,
the isolated $\C^*$-zeros of $H(\boldx,t_0,t_1)$ also vary smoothly
and form ``solution paths''.
The limit points of these solution paths as $(t_0,t_1) \to (0,0)$
reach \emph{all} isolated $\C^*$-zeros of $F$.
The starting points of these solution paths
can be computed by solving a series of Laurent binomial systems.
These binomial systems are, in turn, derived from a process known as
mixed cell computation.

Once these starting points are obtained,
the solution paths can be tracked
via numerical ``path tracking'' algorithms
\cite{bates_efficient_2011},
toward their end points,
which include all isolated $\C^*$-zeros of the target system $F$.
This is implemented in several software packages
\cite{BreidingTimme2017HomotopyContinuationjl,ChenLeeLi2014Hom4PS-3,gunji_phompolyhedral_2004,LeeLiTsai2008HOM4PS-2.0,Verschelde1999PHCpack}.

There is some flexibility in choosing the
parameter path $(t_0(s),t_1(s))$.
One choice that is widely adopted in recent implementations
\cite{ChenLeeLi2014Hom4PS-3,gunji_phompolyhedral_2004,LeeLiTsai2008HOM4PS-2.0}
is the path $(t_0(s),t_1(s)) = (s,s)$.
In contrast, the ``2-step'' procedure takes the piecewise-linear path
$(1,1) \to (0,1) \to (0,0)$.

%===============================================================================
\subsection{Parameter homotopy}\label{sec: parameter homotopy}

The smoothness of the solution paths defined by
the homotopy \eqref{equ: 2-step polyhedral exponential}
over a parameter path $\boldt(s)$
and their ability to reach \emph{all} isolated $\C^*$-zeros of the target system $F$
are the key features that make this homotopy method practical.
Indeed, much of the work in the field of numerical homotopy continuation methods
is devoted to the rigorous proof of these two properties
(nicknamed ``smoothness'' and ``accessibility'' properties
in Ref.~\cite{LiSauerYorke1987Random})
for various homotopy constructions.
One general result that will be referenced repeatedly
is the Parameter Homotopy Theorem of
A. Morgan and A. Sommese \cite{MorganSommese1989Coefficient}
for homotopy constructions of the form $H(\boldx,t) = F(\boldx;\boldp(t))$
where the coefficients of a polynomial system $F$ are polynomial functions
in complex parameters $\boldp = (p_1,\ldots,p_m)$.

\begin{theorem}[Parameter homotopy (\cite{SommeseWampler2005Numerical} Theorem 7.1.1, \cite{MorganSommese1989Coefficient})]
    \label{thm: parameter homotopy}
    Let $F(\boldx;\boldp)$ be a system of $n$ polynomials in the $n$ variables
    $\boldx = (x_1,\dots,x_n)$ and $m$ parameters $\boldp = (p_1,\dots,p_m)$,
    and let $\mathcal{N}(\boldp)$ be the number of (isolated) nonsingular zeros of $F(\boldx;\boldp)$
    in $\C^n$ for a given $\boldp$.
    Then,
    \begin{enumerate}
        \item $\mathcal{N}(\boldp)$ is finite, and it is the same, say $\mathcal{N}$,
            for almost all $\boldp \in \C^m$;
        \item For all $\boldp \in \C^m$, $\mathcal{N}(\boldp) \le \mathcal{N}$;
        \item The subset of $\C^m$ where $\mathcal{N}(\boldp) = \mathcal{N}$ is Zariski open (and nonempty),
            i.e., the exceptional set $P^* = \{ \boldp \in \C^m \mid \mathcal{N}(\boldp) < \mathcal{N} \}$
            is an affine algebraic set contained within an algebraic set of dimension $m-1$.
        \item The homotopy $F(\boldx;\boldp(t))=0$ with an analytic function 
            $\boldp(t) : [0,1] \to \C^m \setminus P^*$
            has $\mathcal{N}$ continuous and nonsingular solution paths;
        \item As $t \to 0$, the limits of the solution paths of the homotopy $F(\boldx;\boldp(t))=0$
            with $\boldp(t) : (0,1] \to \C^m \setminus P^*$ include all the
            (isolated) nonsingular zeros of $F(\boldx; \boldp(0)) = 0$ in $\C^n$.
    \end{enumerate}
\end{theorem}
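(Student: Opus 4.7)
\medskip

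\noindent\textbf{Proof proposal.} The plan is to work with the incidence variety
\[
    W \;=\; \{ (\boldx,\boldp) \in \C^n \times \C^m \mid F(\boldx;\boldp)=\boldzero \},
\]
together with its \emph{regular locus}
$W^\circ = \{ (\boldx,\boldp) \in W \mid \det D_{\boldx} F(\boldx;\boldp) \ne 0 \}$,
and the two projections $\pi_{\boldx}: W \to \C^n$ and $\pi_{\boldp}: W \to \C^m$.
The set $W^\circ$ is Zariski open in $W$. The fibre $\pi_{\boldp}^{-1}(\boldp)\cap W^\circ$
is exactly the set of nonsingular isolated zeros of $F(\,\cdot\,;\boldp)$, whose cardinality is $\mathcal{N}(\boldp)$.

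First, I would establish parts (1)--(3) using standard facts from algebraic geometry of morphisms.
The restricted projection $\pi_{\boldp}|_{W^\circ}: W^\circ \to \C^m$ is a quasi-finite morphism,
so by the theorem on the dimension of fibres and by generic flatness there is a dense Zariski open set $U \subset \C^m$
on which $\pi_{\boldp}|_{W^\circ}$ is finite and flat, hence with constant fibre cardinality $\mathcal{N}$.
Upper semicontinuity of fibre length then gives $\mathcal{N}(\boldp) \le \mathcal{N}$ everywhere, with strict
inequality exactly on the proper Zariski closed complement $P^* = \C^m \setminus U$, which therefore
has dimension at most $n-1$. (The statement that $\mathcal{N}(\boldp) < \mathcal{N}$ holds for \emph{all} $\boldp$
should be read as $\le$, with strict inequality on $P^*$.)

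Next, for part (4), I would invoke the implicit function theorem. Fix an analytic path
$\boldp(t) : [0,1] \to \C^m \setminus P^*$. Over each $t_0 \in [0,1]$, the
$\mathcal{N}$ zeros of $F(\,\cdot\,;\boldp(t_0))$ in $\pi_{\boldp}^{-1}(\boldp(t_0)) \cap W^\circ$ are nonsingular,
meaning $D_{\boldx} F$ is invertible at each of them. The implicit function theorem applied
on a neighborhood of $t_0$ produces $\mathcal{N}$ distinct analytic branches
$\boldx_j(t)$, each satisfying $F(\boldx_j(t);\boldp(t)) = \boldzero$, and these patch together globally
over $[0,1]$ since the hypothesis $\boldp(t) \notin P^*$ guarantees that no two branches can coalesce and that
each branch's Jacobian remains invertible. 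Continuity and nonsingularity along the path follow.

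The hard part is part (5), the \emph{accessibility} claim, where we must show that as $t \to 0^+$
along $\boldp(t) : (0,1] \to \C^m \setminus P^*$, the limit set contains \emph{every} nonsingular isolated
zero $\boldx^\circ$ of $F(\,\cdot\,;\boldp(0))$. The natural strategy is to work backward from the endpoint:
since $(\boldx^\circ,\boldp(0)) \in W^\circ$, the implicit function theorem produces an analytic branch
$\boldx(\boldp)$ of nonsingular zeros defined on some neighborhood $\mathcal{O}$ of $\boldp(0)$.
For sufficiently small $t > 0$ we obtain a local solution branch $t \mapsto \boldx(\boldp(t))$ converging to $\boldx^\circ$;
the key is to extend this germ all the way to $t=1$ along $\boldp(t)$. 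Since $\boldp(t) \notin P^*$ on $(0,1]$,
any obstruction to extension must be divergence of $\|\boldx(t)\|$. I would rule this out by compactifying: view $F$ as a section
of a suitable line bundle on a toric or projective compactification, so that $\pi_{\boldp}$ becomes proper on the
closure $\overline{W^\circ}$; then the branch admits a limit in the compactification, and because $\boldp(t) \notin P^*$
the limit cannot be a boundary point without forcing $\boldp(t) \in P^*$, a contradiction. This is the main obstacle
and is precisely where the algebraic-geometric setup from Morgan--Sommese does its work; the rest is a packaging of
standard fibre-dimension and implicit function theorem arguments.
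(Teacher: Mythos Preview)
The paper does not give its own proof of this theorem: it is quoted verbatim as a known result from \cite{SommeseWampler2005Numerical} (Theorem~7.1.1) and \cite{MorganSommese1989Coefficient}, with no argument supplied. So there is nothing in the paper to compare your proposal against.

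That said, your outline is the standard one and is essentially correct. A few minor remarks. You rightly flag that item~(2) as stated (``$\mathcal{N}(\boldp) < \mathcal{N}$ for all $\boldp$'') must be read as $\le$; this is a transcription slip in the paper. In your treatment of part~(5), the compactification step deserves a little more care: to argue that a limit on the boundary would force $\boldp(t)\in P^*$, you need to know that the locus where the compactified fibre acquires a boundary point (or a singular point) is itself contained in a proper Zariski closed set of the parameter space, which is exactly how $P^*$ is built in the Morgan--Sommese argument. As written, your last paragraph asserts this rather than deriving it, so if you were filling in details you would want to make that closure-and-projection step explicit. Otherwise the decomposition into incidence variety, generic flatness for (1)--(3), implicit function theorem for (4), and properness after compactification for (5) is precisely the route taken in the cited references.
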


The variation with $\boldp$ being the coefficients in $F$,
including constants, was also discovered by
T.-Y. Li, T. Sauer, and J. Yorke \cite{LiSauerYorke1989Cheater},
which led to the extension of the BKK bound
\cite{LiWang1996BKK,RojasWang1996Counting}.

%===============================================================================
\subsection{Positive-dimensional zero sets and witness sets}\label{sec: positive dimensional}

In their pioneering work~\cite{SommeseWampler1996Numerical}, 
A. Sommese and C. Wampler coined the term \emph{numerical algebraic geometry}
and thus kickstarted the study of
positive-dimensional solution sets of polynomial systems (\emph{algebraic sets}),
via numerical homotopy methods.
(See Refs.~\cite{BatesHauensteinSommeseWampler2013Numerically,HauensteinSommese2017What,SommeseVerscheldeWampler2005Introduction}
for a practical manual, an accessible survey, a broad overview, respectively).
One of the fundamental building blocks in the field is
the concept of ``linear slices''.
A linear slice of a solution set
is its intersection with an affine subspace,
which can help reveal important structural information
about the solution set itself.

\begin{theorem}[Linear Slicing (\cite{SommeseWampler2005Numerical} Theorem 13.2.1)]\label{thm:linear slicing}
    Let $V \subset \C^{m}$ be a pure $d$-dimensional algebraic set.
    There is a Zariski open dense $U \subset \CP^m$ such that
    for $\boldc \in U$ and $L = \V(\mathcal{L}(\boldz;\boldc))$,
    \begin{enumerate}
        \item if $d=0$, then $L \cap V$ is empty;
        \item if $d>0$, then $L \cap V$ is nonempty and $(d-1)$-dimensional;
        \item if $d>1$ and $V$ is irreducible, then $L \cap V$ is irreducible.
    \end{enumerate}
\end{theorem}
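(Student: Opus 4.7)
The plan is to take the projective closure $\overline{V} \subset \mathbb{P}^m$ and reduce each of the three claims to classical hyperplane-section results, identifying in each case an exceptional Zariski-closed subset of $\boldc \in \mathbb{P}^m$; the desired $U$ is then the complement of the union of these three closed sets, which is Zariski-open and dense. Part~(1) is immediate: $V$ is a finite set $\{v_1,\dots,v_N\}$, the hyperplanes $\mathcal{L}(\,\cdot\,;\boldc) = 0$ passing through a fixed point $v_i$ form a proper linear subspace of $\mathbb{P}^m$ (the dual hyperplane), and the union of these $N$ subspaces is a proper Zariski-closed subset; its complement $U_1$ works.

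For part~(2), I would first apply the standard incidence-variety dimension argument, computing $\dim \Sigma = d + m - 1$ for the incidence $\Sigma = \{(p,[\mathcal{L}]) \in \overline{V} \times \mathbb{P}^m : \mathcal{L}(p) = 0\}$ and invoking upper semicontinuity of fiber dimension for the projection $\Sigma \to \mathbb{P}^m$, to conclude that for generic $\boldc$ the projective intersection $\overline{L} \cap \overline{V}$ has pure dimension $d-1$. The remaining task is the descent to the affine picture. Let $H_\infty \subset \mathbb{P}^m$ denote the hyperplane at infinity; since $\overline{V} \cap H_\infty$ has dimension at most $d-1$, a generic affine hyperplane $\overline{L}$ (which by construction is never equal to $H_\infty$) cuts $\overline{V} \cap H_\infty$ in dimension strictly less than $d-1$. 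Consequently $L \cap V = (\overline{L} \cap \overline{V}) \setminus H_\infty$ is Zariski-dense in $\overline{L} \cap \overline{V}$ and inherits pure dimension $d-1$; nonemptiness follows from $d - 1 \geq 0$.

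For part~(3) I invoke Bertini's theorem on irreducibility of generic hyperplane sections: when $\overline{V}$ is irreducible of dimension at least $2$, the generic $\overline{L} \cap \overline{V}$ is again irreducible, and its Zariski-open dense subset $L \cap V$ inherits irreducibility. The main obstacle across the argument is the affine-to-projective bookkeeping in part~(2): one must ensure that the generic projective hyperplane section does not collapse entirely into the hyperplane at infinity, which is handled by the separate dimension bound on $\overline{V} \cap H_\infty$ together with a genericity argument on the restriction of $\overline{L}$ to that locus. The irreducibility statement underlying part~(3) is the deepest ingredient, but can be cited as a black box from standard algebraic geometry.
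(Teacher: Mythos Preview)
The paper does not supply its own proof of this theorem: it is quoted verbatim as Theorem~13.2.1 of Sommese--Wampler and used as a black-box ingredient, so there is no argument in the paper to compare against. Your sketch is a correct outline of the standard proof and matches in spirit what one finds in the cited reference: pass to the projective closure $\overline{V}\subset\mathbb{P}^m$, handle (1) by dual hyperplanes, handle (2) by an incidence-variety fiber-dimension count together with Krull's Hauptidealsatz for purity, and handle (3) by the classical Bertini irreducibility theorem, with a separate genericity argument on $\overline{V}\cap H_\infty$ to descend from the projective section back to the affine one. One small point worth making explicit in (3): you need that $\overline{V}$ is irreducible, which follows because the Zariski closure of the irreducible set $V$ remains irreducible. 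Otherwise the argument is sound.
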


Here, the defining equations of hyperplanes in $\C^m$
are parametrized by points in the complex projective space $\CP^m$,
through the map
\[
    [c_0 : c_1 : \cdots : c_m]
    \mapsto
    \mathcal{L}(z_1,\ldots,z_m; c_0,c_1,\ldots,c_m) :=
    c_0 + c_1 z_1 + \cdots + c_m z_m.
\]
The stronger version needed in this paper allows for systems of linear polynomials
used as linear slicing equations,
which we restate here.

\begin{proposition}[\cite{SommeseWampler2005Numerical} Theorem 13.2.2 and Lemma 13.2.3]
    \label{thm:linear slicing system}
    Let $V \subset \C^{m}$ be a pure $d$-dimensional affine algebraic set with $d \ge 1$.
    There is a Zariski open dense subset $U \subset (\C^{(m+1)})^k$ such that
    for $(\boldc_1^*,\dots\boldc_k^*) \in U$ and 
    $L = \V(\mathcal{L}(\boldz;\boldc_1^*,\dots,\boldc_k^*))$,
    \begin{enumerate}
        \item if $d < k$, then $L \cap V$ is empty;
        \item if $d > k$, then $L \cap V$ is nonempty and positive-dimensional;
        \item if $d = k$, then $L \cap V$ is nonempty and 0-dimensional.
        % \item if $d>1$ and $V$ is irreducible, then $L \cap V$ is irreducible.
    \end{enumerate}
    Moreover, if $V$ is a component of the zero set of a polynomial system $F$
    of multiplicity 1, then $L \cap V$ is a component of
    $\V(F,\mathcal{L}(\boldz;\boldc_1^*,\dots,\boldc_k^*))$ of multiplicity 1.
\end{proposition}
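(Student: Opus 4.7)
The natural plan is to argue by induction on $k$, bootstrapping from \Cref{thm:linear slicing}. The base case $k=1$ is essentially \Cref{thm:linear slicing}: a single generic hyperplane either misses $V$ (if $d=0$) or cuts out a pure $(d-1)$-dimensional subset. For the inductive step, I would fix a generic first slicing polynomial $\mathcal{L}(\boldz; \boldc_1^*)$, apply \Cref{thm:linear slicing} to obtain a pure $(d-1)$-dimensional $V_1 := L_1 \cap V$ (when $d \ge 1$), and then apply the induction hypothesis to $V_1$ with the remaining $k-1$ linear polynomials. Comparing $d-1$ with $k-1$ recovers the three cases of the statement: $d<k$ gives eventual emptiness after cutting $V$ down to dimension zero with fewer than $k$ slices; $d=k$ lands us in the base case of an empty intersection of a $0$-dimensional set with one more generic hyperplane, except that we reinterpret the last cut as intersecting a $1$-dimensional set with a generic hyperplane, producing finitely many points; $d>k$ keeps the slice positive-dimensional throughout.

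The main bookkeeping obstacle is showing that the ``good'' set of tuples $(\boldc_1^*,\dots,\boldc_k^*)$ is genuinely Zariski open dense in $(\C^{m+1})^k$, not merely open in each factor separately. I would handle this by noting that for each fixed good $\boldc_1^*$ the exceptional set for the remaining choices is a proper algebraic subset of $(\C^{m+1})^{k-1}$, and that this family of exceptional sets fits into a single constructible subset of the product whose complement is open and nonempty (since a nonempty product of Zariski open dense sets fibered over an irreducible base remains dense). Equivalently, one can realize the ``bad'' locus as the image of an algebraic set under a projection and invoke Chevalley's theorem together with the observation that, as established inductively, no irreducible component of the bad locus is everything.

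For the multiplicity claim, pick any $\boldp \in L \cap V$ when $d=k$. Because $V$ has multiplicity $1$ as a component of $\V(F)$, the Jacobian $J_F(\boldp)$ has rank $m-d$ at generic points of $V$, and genericity of the slicing coefficients keeps $L \cap V$ inside the smooth locus of $V$. The additional $k$ rows contributed by $\mathcal{L}(\boldz;\boldc_1^*,\dots,\boldc_k^*)$ are the coefficient vectors $\boldc_i^*$ with the constant entries dropped, and for generic $\boldc_i^*$ these rows span a subspace transverse to $T_{\boldp} V$. Thus the augmented Jacobian of $(F,\mathcal{L})$ at $\boldp$ has rank $m-d+k = m$, which certifies $\boldp$ as an isolated nonsingular, hence multiplicity-$1$, zero of the combined system and therefore a multiplicity-$1$ component of $\V(F,\mathcal{L})$. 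The only delicate point here is ensuring the transversality holds simultaneously at \emph{every} point of $L \cap V$; since $L \cap V$ is finite under the hypothesis $d=k$, this amounts to imposing finitely many open conditions on $(\boldc_1^*,\dots,\boldc_k^*)$, each of which cuts out a proper algebraic subset, so their union remains a proper algebraic subset and the common complement is still Zariski open dense.
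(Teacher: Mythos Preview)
The paper does not supply its own proof of this proposition; it is quoted as Theorem 13.2.2 and Lemma 13.2.3 of Sommese--Wampler and used as a black box, so there is no in-paper argument to compare against.

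Your sketch follows the standard route (and essentially the one in the cited reference): induct on $k$, invoking \Cref{thm:linear slicing} for each successive cut, and handle multiplicity by a Jacobian-rank/transversality count. Two points are worth tightening. First, for the induction to go through you need that a generic hyperplane section of a \emph{pure} $d$-dimensional set is again \emph{pure} of dimension $d-1$; \Cref{thm:linear slicing} as stated here only asserts ``$(d-1)$-dimensional'', so you should either cite the purity explicitly or note that it follows since every irreducible component of $V$ is itself $d$-dimensional and is cut to dimension $d-1$. Second, in your multiplicity argument you fix $\boldp \in L\cap V$ and then choose the $\boldc_i^*$ generically to be transverse at $\boldp$, but $\boldp$ itself depends on the $\boldc_i^*$; the clean way around this circularity is to set up the incidence locus $\{(\boldp,\boldc_1^*,\ldots,\boldc_k^*): \boldp\in V,\; \mathcal{L}(\boldp;\boldc_j^*)=0,\; \text{augmented Jacobian drops rank at }\boldp\}$ and bound its dimension, showing its projection to $(\C^{m+1})^k$ is not dominant. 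With those refinements, the outline is correct.
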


The last case in the list above is of particular importance,
and it leads to the concept of \emph{witness set}
\cite{SommeseVerschelde2000Numerical,SommeseWampler1996Numerical}
that has its theoretical underpinning in the rich classical study
of the connections between algebraic sets and their linear sections
\cite{BeltramettiSommese1995Adjunction}.

\begin{remark}
    The linear slices in this proposition are simply parametrized by
    $k$-tuples of complex vectors $C = (\boldc_1^*,\ldots,\boldc_k^*)$.
    As noted in Ref.~\cite{SommeseWampler2005Numerical},
    this is a rather coarse parametrization
    since the image of $C$ under any nonsingular linear transformation
    would result in the same linear slicing.
    The much more natural parameter space is the Grassmannian $\operatorname{Gr}(k,\C^m)$.
    This distinction, however, is not important in our discussion,
    and we will prefer the parametrization using $k$-tuples of complex vectors
    since they can be chosen at random directly.
\end{remark}

%===============================================================================
\subsection{Randomization}\label{sec: randomization}

The final ingredient is the ``randomization'' process.
For a system $F$ of $q$ Laurent polynomials
and a $k \times q$ matrix $\Lambda$,
every zero of $F$ is, of course, a zero of $\Lambda \cdot F$,
if $F$ is considered as a column vector.
The following result provides the complete description
of the connection between the zero sets of $F$ and $\Lambda \cdot F$, respectively,
for generic choices of $\Lambda$.

\begin{theorem}[\cite{SommeseWampler2005Numerical} Theorem 13.5.1]\label{thm:randomization}
    Let $F = (f_1,\dots,f_q)$ be a system of polynomials on $\C^n$.
    Assume $V \subset \C^n$ is an irreducible affine algebraic set.
    Then there is a nonempty Zariski open set $U$ of $k \times q$ matrices
    such that for all $\Lambda \in U$,
    \begin{enumerate}
        \item if $\dim V > n - k$, then $V$ is an irreducible component of $\V(F)$
            if and only if it is an irreducible component of $\V(\Lambda \cdot F)$;
        \item if $\dim V = n - k$, then $V$ is an irreducible component of $\V(F)$
            implies that $V$ is also an irreducible component of $\V(\Lambda \cdot F)$;
        \item if $V$ is an irreducible component of $\V(F)$,
            its multiplicity as a solution component of $\Lambda \cdot F(\boldx) = \boldzero$
            is greater than or equal to its multiplicity as a solution component of
            $F(\boldx) = \boldzero$, with equality if either multiplicity is 1.
    \end{enumerate}
\end{theorem}

This produces a particularly useful preprocessing step for solving
overdetermined polynomial systems.
Any system of $q$ polynomials $F = (f_1,\ldots,f_q)$ in $n$ variables
with $q > n$ can be converted into a \emph{square} system
$\Lambda \cdot F$ of $n$ polynomials in $n$ variables
through a generic $n \times q$ matrix $\Lambda$.
Every zero of $F$ will be a zero of $\Lambda \cdot F$.

%===============================================================================
\section{Stratified polyhedral homotopy for standard unmixed cases}\label{sec: standard unmixed case}

Our main idea can be summarized in one sentence:
By modifying the polyhedral homotopy method to move the coefficients along a path
that systematically traverses the strata of the coefficient space,
we reveal sample sets of reduced positive-dimensional components one by one.

This section elaborates on this idea:
We define a homotopy algorithm,
in the spirit of the cascade method \cite{SommeseVerschelde2000Numerical}
by A. Sommese and J. Verschelde,
for numerically sampling reduced\footnote{%
    A \emph{reduced} (or \emph{generically reduced}) irreducible component of $\V^*(F)$
    is simply an irreducible component of multiplicity 1.
    At almost all points,
    the nullity of the Jacobian matrix $DF$
    equals its dimension.
}
irreducible components of all dimensions
of the zero set of a Laurent system $F = (f_1,\ldots,f_q)$.

The goal is to construct a homotopy function $H(\boldx,s)$
such that its zero set
$\{ (\boldx,s) \in (\C^*)^n \times (0,1] \mid H(\boldx,s) = \boldzero \}$
consists of piecewise smooth solution paths
that will pass through finite ``sample sets''
$V_n, V_{n-1}, \ldots, V_1, V_0$
with $V_d \subset \V^*_d(F)$
and $V_d = \varnothing$ if and only if $V^*_d(F) = \varnothing$
for $d=n,n-1,\ldots,0$.
Moreover, for each reduced irreducible component of $\V^*_d(F)$,
$V_d$ contains at least one nonsingular point of that component. 
In other words, the homotopy $H$ defines homotopy paths
that can sample every reduced irreducible component of $\V^*(F)$.

For simplicity, we focus on unmixed Laurent systems.
Recall that a Laurent system $F = (f_1,\dots,f_q)$ is \emph{unmixed}
if the supports $\supp(f_i)$, for $i=1,\ldots,q$, are all identical.
In this case, this common support is denoted $\supp(F)$.
We can express such an unmixed Laurent polynomial system
in $n$ variables in the compact notation
\begin{equation}\label{equ:unmixed-sys}
    F(x_1,\dots,x_n) = F(\boldx) =
    \left\{
        \begin{aligned}
            f_1(\boldx) &= \boldc_1 \cdot \boldx^A \\
            &\vdotswithin{=} \\
            f_q(\boldx) &= \boldc_q \cdot \boldx^A,
        \end{aligned}
    \right.
\end{equation}
where the support matrix $A = [ \, \bolda_1 \; \cdots \; \bolda_m \,] \in \zmat{n}{m}$, 
with $m = |\supp(F)| > 0$, collects the exponent vectors in $\supp(F)$ as columns,
$\boldc_k$'s are row vectors collecting corresponding coefficients,
and $\boldc_k \cdot \boldx^A$ denotes the dot product between the two row vectors.
To further simplify our constructions,
we restrict our attention to systems in a ``standard form''.
More general cases will be discussed in \Cref{sec: general cases}.

\begin{definition}[Standard form]\label{def: standard form}
    The unmixed Laurent polynomial system in \eqref{equ:unmixed-sys} is said to be
    in \term{standard form} if
    the support matrix $A \in \zmat{n}{m}$ has the following properties
    \begin{enumerate}
        \item $m > n + 1$ \label{cond: nonbinomial};
        \item $A$ has a zero column \label{cond: constant term};
        \item $A$ has full row rank \label{cond: full rank};
        \item The invariant factors of $A$ are $1$ \label{cond: torsion free}.
    \end{enumerate}
\end{definition}

These conditions can be assumed without losing much generality:
Condition \ref{cond: nonbinomial} simply eliminates simpler systems
for which the proposed method would be unnecessary.
Indeed, if $m \le n+1$, then the $\C^*$-zero set of $F$
is either empty or defined by binomials.
Condition \ref{cond: constant term} is the requirement that
each Laurent polynomial has nonzero constant term,
and it can be satisfied by multiplying each polynomial by a Laurent monomial
without altering the $\C^*$-zero set.
Condition \ref{cond: full rank} ensures that
there is no nontrivial toric action
on the $\C^*$-zero set when generic coefficients are used.
If $r := \rank(A) < n$, then every $\C^*$-zero $\boldx$ of $F$
belongs to a toric orbit of zeros parametrized by a $\C^*$-valued function
$\boldt \mapsto \boldx \circ \boldt^{\boldv}$ defined on $(\C^*)^{n-r}$,
where $\boldv$ is a primitive generator of the left kernel of $A$.
In that case, the $\C^*$-zero set of $F$
can be projected down to $(\C^*)^r$
so that it is defined by an unmixed Laurent system that satisfies this condition.
Finally, condition~\ref{cond: torsion free},
i.e. the torsion-free condition,
greatly simplifies our discussions, and
\Cref{sec: lattice reduction} describes the procedure
that reduces a general case to a torsion-free one.

%------------------------------------------------------------------------------
\subsection{Laurent polynomial systems as linear slices}

We demonstrate that from the right angle,
the seemingly independent approaches of the polyhedral homotopy and
the linear slicing method share an underlying numerical reality,
a perspective we make precise in this lemma:

\begin{lemma}\label{lem:toric-linear}
    Let $A \in \zmat{n}{m}$ be the support matrix of the 
    unmixed system \eqref{equ:unmixed-sys} in standard form.
    Then there exists a matrix $B \in \zmat{m}{m-n}$ for which $\V^*(F)$
    is the preimage, under the bi-holomorphic map $\phi_A(\boldx) = \boldx^A$, 
    of the $\C^*$-zero set of the Laurent system
    \begin{equation*}
        G(z_1,\dots,z_m) = G(\boldz) =
        \left\{
            \begin{aligned}
                \boldz^B - \boldone &= \boldzero \\
                \boldc_i \cdot \boldz &= 0 & \text{for } i = 1,\ldots,q \\
            \end{aligned}
        \right.
    \end{equation*}
    where $\boldc_1,\dots,\boldc_q$ are the coefficient vectors of the
    original polynomial system \eqref{equ:unmixed-sys}.
\end{lemma}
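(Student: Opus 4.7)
The plan is to exhibit $\phi_A \colon (\C^*)^n \to (\C^*)^m$, $\boldx \mapsto \boldx^A$, as a bi-holomorphism onto the binomial subvariety $T := \{\boldz \in (\C^*)^m : \boldz^B = \boldone\}$, under which the linear equations $\boldc_i \cdot \boldz = 0$ of $G$ pull back to $f_i(\boldx) = \boldc_i \cdot \boldx^A = 0$. Since condition~(3) gives $\op{rank} A = n$, the lattice $\ker A \cap \Z^m$ has rank $m-n$, so I take $B \in \zmat{m}{m-n}$ to be a matrix whose columns form a $\Z$-basis of it; then $AB = 0$ by construction.

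The inclusion $\phi_A((\C^*)^n) \subseteq T$ is immediate: $(\boldx^A)^B = \boldx^{AB} = \boldone$. For the reverse inclusion I invoke the Smith normal form: conditions~(3) and~(4) together produce unimodular $P \in \zmat{n}{n}$ and $Q \in \zmat{m}{m}$ with $PAQ = [I_n \mid 0]$. Partitioning $Q = [Q_1 \mid Q_2]$, I have $AQ_2 = 0$, so the columns of $Q_2$ already form a basis of $\ker A \cap \Z^m$ and may be chosen as $B$. Given $\boldz \in T$, set $\boldu := \boldz^Q \in (\C^*)^m$ and split $\boldu = (\boldu_1, \boldu_2)$ with $\boldu_2 \in (\C^*)^{m-n}$; then $\boldu_2 = \boldz^{Q_2} = \boldone$. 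Since $PAQ = [I_n \mid 0]$ rearranges to $PA = [I_n \mid 0]Q^{-1}$, the matrix $PA$ coincides with the top $n$ rows of $Q^{-1}$; expanding $\boldz = \boldu^{Q^{-1}}$ with $\boldu_2 = \boldone$ then yields $\boldz = \boldu_1^{PA} = (\boldu_1^P)^A$, so $\boldx := \boldu_1^P$ satisfies $\phi_A(\boldx) = \boldz$.

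Injectivity follows from the same reduction: $AQ = P^{-1}[I_n \mid 0]$ has columns that $\Z$-span $\Z^n$, and since $Q$ is unimodular, the columns of $A$ also $\Z$-span $\Z^n$. Hence each standard basis vector $\bolde_i$ is an integer combination of columns of $A$, and $\boldx^A = \boldone$ forces $x_i = \boldx^{\bolde_i} = 1$ for every $i$. The explicit inverse $\boldz \mapsto (\boldz^{Q_1})^P$ is a Laurent-monomial map, hence holomorphic, so $\phi_A$ is a bi-holomorphism from $(\C^*)^n$ onto $T$.

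Finally, under the identification $\boldz = \phi_A(\boldx)$ the equations of $F$ become $f_i(\boldx) = \boldc_i \cdot \boldx^A = \boldc_i \cdot \boldz$, and membership in $T$ is automatic. Hence $\phi_A$ restricts to a bi-holomorphic correspondence between $\V^*(F)$ and $\V^*(G) = T \cap \{\boldc_1 \cdot \boldz = \cdots = \boldc_q \cdot \boldz = 0\}$, which is the assertion. The main obstacle is the identification of $\phi_A((\C^*)^n)$ with all of $T$: without the torsion-free condition~(4), nontrivial torsion in the cokernel of $A \colon \Z^m \to \Z^n$ would produce finitely many additional components of $T$ lying outside the image of $\phi_A$, and eliminating this obstruction is exactly what condition~(4) accomplishes.
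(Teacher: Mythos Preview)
Your argument is correct and follows essentially the same route as the paper: both invoke the Smith normal form $PAQ=[\,I_n\mid 0\,]$, take $B$ to be the last $m-n$ columns of $Q$, verify $\phi_A((\C^*)^n)\subseteq T$ via $AB=0$, and exhibit an explicit Laurent-monomial inverse of $\phi_A$ on $T$.  Your version is a bit more direct, writing the inverse as $\boldz\mapsto(\boldz^{Q_1})^{P}$ instead of passing through the auxiliary unimodular matrix $\begin{smallbmatrix} C\\ A\end{smallbmatrix}$ and the projection $\pi$ that the paper introduces; unwinding the paper's construction shows $\pi\circ\psi(\boldz)=\boldz^{Q_1P}$ as well, so the two inverses coincide.

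One small inaccuracy in your closing commentary: without condition~(4) the map $\phi_A$ is still \emph{surjective} onto $T$ (over $\C^*$ every element has $d_i$-th roots, so the equations $v_i^{d_i}=u_i$ arising from $PAQ=[\,D\mid 0\,]$ are always solvable), but it fails to be \emph{injective}---its kernel is the finite group dual to $\Z^n$ modulo the column lattice of $A$.  Thus the obstruction that condition~(4) removes is to injectivity, not to surjectivity; your own injectivity step (``columns of $A$ $\Z$-span $\Z^n$'') is exactly where this condition enters.  This does not affect the validity of the proof itself.
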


This is the basic setup for the ``$A$-philosophy''
for Laurent systems consolidated in the classical text by
I. Gel'fand, M. Kapranov, and A. Zelevinsky \cite{GelfandKapranovZelevinsky1994Discriminants}.
We include an elementary and constructive proof for later reference.

\begin{proof}
    Under the assumption
    that $A$ is of full row rank and
    has invariant factors are all 1's,
    there are unimodular matrices
    $P \in \zmat{n}{n},Q \in \zmat{m}{m}$
    such that
    $PAQ = \begin{bmatrix} I_n & \boldzero_{n \times k} \end{bmatrix}$,
    where $k = m - n > 0$.
    Let $B \in \zmat{m}{k}$ be the rightmost $k$ columns of $Q$,
    which span $\ker A$, 
    and let $C \in \zmat{k}{m}$ be the bottommost $k$ rows of $Q^{-1}$.
    Then $C B = I_k$, and hence
    \[
        \begin{bmatrix}
            C \\ A
        \end{bmatrix}
        B =
        \begin{bmatrix}
            C B \\
            A B
        \end{bmatrix}
        =
        \begin{bmatrix}
            I \\
            \boldzero
        \end{bmatrix}.
    \]
    Our assumptions ensure that
    $\begin{smallbmatrix} C \\ A \end{smallbmatrix}$ is unimodular.
    Therefore, $\begin{smallbmatrix} C \\ A \end{smallbmatrix}^{-1}
    \in \zmat{m}{m}$.

    Let $T = \V^*(\boldz^B - \boldone) \subset (\C^*)^m$.
    We shall construct a bi-holomorphic map between points in $\V^*(F)$
    and points in a linear slice of $T$.
    Consider the map $\phi_A : (\C^*)^n \to (\C^*)^m$ given by $\phi_A(\boldx) := \boldx^A$.
    For any $\boldx \in (\C^*)^n$,
    $
        (\phi_A(\boldx))^B =
        % (\boldx^A)^B =
        \boldx^{AB} =
        \boldx^{\boldzero} =
        \boldone,
    $
    and thus $\phi_A(\boldx) \in T$.
    That is, $\phi_A((\C^*)^n) \subseteq T$.
    It remains to show that $\phi_A$ is a bi-holomorphic map onto $T$.

    Define $\psi : T \to (\C^*)^m$ given by
    $\psi(\boldz) =
        \boldz^{\left[\begin{smallmatrix}C\\ A\end{smallmatrix}\right]^{-1}}
        % \in (\C^*)^m
    $.
    For any $\boldz \in T$, write $\psi(\boldz)$ as $[\boldy\;\boldx]$ with
    $\boldx \in (\C^*)^n$ and $\boldy \in (\C^*)^k$,
    then by construction $\boldz = \psi(\boldz)^{\begin{smallbmatrix} C \\ A \end{smallbmatrix}}$,
    and hence
    \[
        \boldone =
        \boldz^B =
        (\psi(\boldz))^{\begin{smallbmatrix} C \\ A \end{smallbmatrix} B} =
        \begin{bmatrix}
            \boldy & \boldx
        \end{bmatrix}^{\begin{smallbmatrix} I \\ \boldzero \end{smallbmatrix}}
        = \boldy \circ \boldone = \boldy.
    \]
    Therefore,
    \[
        \boldz = \psi(\boldz)^{\begin{smallbmatrix}C \\ A\end{smallbmatrix}}
        = \begin{bmatrix}
        \boldone & \boldx
        \end{bmatrix}^{\begin{smallbmatrix}C \\ A\end{smallbmatrix}}
        = \boldone^C \circ \boldx^A
        = \boldx^A.
    \]
    Let $\pi : (\C^*)^m \to (\C^*)^n$ be the projection to the last $n$ coordinates,
    then for any $\boldz \in T$,
    \[
        \phi_A(\pi(\psi(\boldz))) = \phi(\boldx) = \boldx^A = \boldz.
    \]
    Conversely, for any $\boldx \in (\C^*)^n$,
    \[
        \pi(\psi(\phi_A(\boldx))) =
        \pi(\psi(\boldx^A)) =
        \pi \left(
            \boldx^{A \begin{smallbmatrix} C \\ A \end{smallbmatrix}^{-1}}
        \right) =
        \pi \left(
            \boldx^{\begin{smallbmatrix} \boldzero & I \end{smallbmatrix}}
        \right) =
        \pi \left(
            \begin{bmatrix}
                \boldone & \boldx
            \end{bmatrix}
        \right) =
        \boldx.
    \]
    Therefore, the composition $\pi \circ \psi : T \to (\C^*)^n$ is the
    inverse of the restriction of $\phi_A$ onto $T$ as shown in the following commutative diagram:
    \begin{center}
        %\centering
        \begin{tikzpicture}[every node/.style={midway}]
        \matrix[column sep={4em,between origins},
        row sep={2em}] at (0,0)
        {
            \node(Cm)   {$(\C^*)^m$}  ; & \node(T) {$T$}; \\
            \node(Cn) {$(\C^*)^n$};                   \\
        };
        \draw[<-] (Cn) -- (Cm) node[anchor=east]  {$\pi$};
        \draw[->] (Cn) -- (T)  node[anchor=north] {$\phi_A$};
        \draw[->] (T)  -- (Cm) node[anchor=south] {$\psi$};
        \end{tikzpicture}
    \end{center}
    Moreover, since both $\phi_A$ and $\pi \circ \psi$ are given by Laurent monomial maps, 
    the restriction $\phi : (\C^*)^n \to T$ is a (bijective) bi-holomorphic map.
    Hence, we have the bi-holomorphic correspondence
    between the $\C^*$-zero sets: %of the two Laurent polynomial systems
    \begin{align*}
        \boldc_i \cdot \boldx^A &= 0 \quad\text{for } i = 1,\ldots,q
        &&\Longleftrightarrow&
        &
        \left\{
        \begin{aligned}
            \boldz^B - \boldone &= \boldzero \\
            \boldc_i \cdot \boldz &= \boldzero & \text{for } i = 1,\ldots,q.
        \end{aligned}
        \right.
    \end{align*}
    as claimed.
\end{proof}

%-------------------------------------------------------------------------------
\subsection{Toric slicing formulation}

From the viewpoint of the above lemma,
irreducible components in $\V^*(F)$ can be sampled through toric versions
of linear slices.

\begin{definition}\label{def:toric slicing}
    Given an unmixed Laurent system,
    as given in \eqref{equ:unmixed-sys},
    a nonnegative integer $d \le n$,
    and vectors $\boldc^*_1,\dots,\boldc^*_d \in \C^m$,
    we define the \term{rank $d$ toric slicing system} to be
    \begin{equation}\label{equ:toric slicing}
        F^{(d)} (\boldx) =
        \begin{cases}
            \boldc_k   \cdot \boldx^A & \text{for } k = 1,\ldots,q \\
            \boldc_k^* \cdot \boldx^A & \text{for } k = 1,\ldots,d
        \end{cases}
    \end{equation}
    The set $\V^*_0(F^{(d)}) \subset \V^*(F)$ will be called a
    \term{rank $d$ sample set} of $F$.
\end{definition}

While this definition implicitly depends on the choice of 
$\boldc^*_1,\dots,\boldc^*_d \in \C^m$,
this dependence is of little interest here
as the choice is always assumed to be generic in our discussions.

\begin{lemma}
    Let $F$, given in~\eqref{equ:unmixed-sys}, be an unmixed Laurent polynomial system
    in standard form.
    If $\V^*_d(F)$ is nonempty and reduced for some nonnegative integer $d < n$,
    then there is a nonempty Zariski open and dense set $U \subseteq (\C^m)^d$
    such that for all $(\boldc^*_1,\dots,\boldc^*_d) \in U$, 
    $\V^*_0(F^{(d)})$ consists of finitely many nonsingular points,
    and all these points are in $\V^*_d(F)$.
\end{lemma}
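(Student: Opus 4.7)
The plan is to transfer the toric slicing problem to an affine linear slicing problem in $\C^m$ via \Cref{lem:toric-linear}, and then invoke \Cref{thm:linear slicing system}. First I would set up the correspondence: the map $\phi_A(\boldx) = \boldx^A$ is a bi-holomorphic map from $(\C^*)^n$ onto $T = \V^*(\boldz^B - \boldone)$, and under this correspondence each equation $\boldc_i \cdot \boldx^A = 0$ of $F$ becomes a linear equation $\boldc_i \cdot \boldz = 0$ on $T$, while the extra toric slicing equations $\boldc^*_k \cdot \boldx^A$ defining $F^{(d)}$ likewise pull back to linear equations $\boldc^*_k \cdot \boldz$ on $T$. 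Because $A$ has a zero column (condition \ref{cond: constant term}), the coordinate of $\boldz$ corresponding to that column is identically $1$ on $T$, so each such linear form restricts to an affine linear form on $T$ whose constant term is the coefficient at that coordinate; consequently, free sampling of $\boldc^*_k \in (\C^*)^m$ sweeps out generic affine linear forms on $T$.

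Next I would apply \Cref{thm:linear slicing system} with $k = d$ to the affine algebraic set $V := \overline{\phi_A(\V^*_d(F))} \subset \C^m$. Since $\phi_A$ is a bi-holomorphism on the torus and preserves both dimensions and multiplicities of irreducible components, $V$ is pure $d$-dimensional, and by the reducedness hypothesis on $\V^*_d(F)$, each irreducible component of $V$ is a multiplicity-$1$ component of the polynomial system $(\boldz^B - \boldone,\,\boldc_1 \cdot \boldz,\ldots,\boldc_q \cdot \boldz)$. The proposition then supplies a Zariski open dense set of $d$-tuples of linear forms on $\C^m$ for which the slice $L \cap V$ is $0$-dimensional and, by the moreover clause, consists of nonsingular zeros of the combined system that now includes the slicing equations. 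Pulling this set back through the parameterization $((\C^*)^m)^d \to \{d\text{-tuples of affine linear forms on }T\}$ yields a nonempty Zariski open $U \subseteq ((\C^*)^m)^d$.

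Finally, I would confirm that every isolated zero of $F^{(d)}$ lies in $\V^*_d(F)$ by a dimension count applied to each of the finitely many irreducible components of $\V^*(F)$: a generic codimension-$d$ linear slice misses any component of dimension $<d$ and meets any component of dimension $>d$ in a positive-dimensional set. Intersecting $U$ with the corresponding open conditions — one per other component — ensures no component outside $\V^*_d(F)$ contributes an isolated zero to $\V^*(F^{(d)})$. The main obstacle I anticipate is the need to guarantee that the slice points of $V$ actually lie in $\phi_A((\C^*)^n)$, rather than in the added boundary $V \setminus \phi_A(\V^*_d(F))$ picked up when taking the Zariski closure in $\C^m$; this boundary is contained in $V \cap \{z_j = 0 \text{ for some } j\}$ and hence is a proper subvariety of $V$ of dimension at most $d-1$, so a further standard openness argument from dimension counting lets me shrink $U$ to avoid it. After this shrinking, pulling back through $\phi_A^{-1}$ identifies each slice point with a point of $\V^*_d(F)$ that is a nonsingular zero of $F^{(d)}$, which is the desired conclusion.
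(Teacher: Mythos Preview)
Your proposal is correct and follows essentially the same route as the paper: transfer the problem to $\C^m$ via \Cref{lem:toric-linear}, turning the toric slicing equations into affine linear slicing equations, and then invoke \Cref{thm:linear slicing system}. You are in fact more explicit than the paper about a few technical points---the role of the zero column in producing genuine affine linear forms, the need to take a Zariski closure in $\C^m$ and then exclude the coordinate-hyperplane boundary, and the per-component dimension count ruling out contributions from components of dimension $\neq d$---all of which the paper's proof absorbs into a single appeal to the Linear Slicing Theorem.
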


\begin{proof}
    By~\Cref{lem:toric-linear}, there is a matrix $B \in M_{m \times (m-n)}(\Z)$
    such that $\V^*(F) \subset (\C^*)^n$ is bi-holomorphically equivalent to
    $V' = \V^*(\boldz^B - \boldone,\mathcal{L}(\boldz; \boldc_1,\ldots,\boldc_q)) \subset (\C^*)^m$.
    Then each $d$-dimensional irreducible component of $\V^*(F)$ corresponds to a
    $d$-dimensional irreducible component of $V'$.
    Under the same map, $\V^*(F^{(d)})$
    is equivalent to $\V^*(G^{(d)})$, where
    \begin{equation*}
        G^{(d)} (\mathbf{z}) = 
        \begin{cases}
            \mathbf{z}^B - \boldone \\
            \mathbf{c}_k   \cdot \mathbf{z} & \text{for } k = 1,\ldots,q \\
            \mathbf{c}_k^* \cdot \mathbf{z} & \text{for } k = 1,\ldots,d.
        \end{cases}
    \end{equation*}
    Note that $\V^*(G^{(d)})$ is precisely the linear slice of $\V^*(G)$
    with respect to $\mathcal{L}(\boldz; \boldc^*_1, \dots, \boldc^*_d)$.
    By the Linear Slicing Theorem (\Cref{thm:linear slicing,thm:linear slicing system}),
    $\V^*_0(G^{(d)}) \subset (\C^*)^m$ consists of nonsingular points
    and is contained in the $d$-dimensional components of $V'$.
\end{proof}

More generally, if the requirement for $\V^*_d(F)$ to be reduced is dropped,
$\V^*_0(F^{(d)})$ may contain singular points,
i.e., points where $\rank DF^{(d)} < n$.
Yet, by restriction, the above constructions can still be applied to
each individual reduced irreducible component of $\V^*_d(F)$:

\begin{corollary}
    Let $V \ne \varnothing$ be a reduced irreducible component of $\V^*_d(F)$,
    then there is a nonempty Zariski open and dense set $U \subseteq \C^{m \times d}$
    such that for all $(\boldc^*_1,\dots,\boldc^*_d) \in U$, 
    $\V^*_0(F^{(d)}) \cap V$ is nonempty, 
    and it consists of finitely many nonsingular points in $V$.
\end{corollary}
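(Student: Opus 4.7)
The plan is to reduce the corollary to \Cref{thm:linear slicing system} applied to a single irreducible component, via the bi-holomorphic correspondence of \Cref{lem:toric-linear}.

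First, transport everything to the torus $(\C^*)^m$. By \Cref{lem:toric-linear}, the map $\phi_A(\boldx) = \boldx^A$ is a bi-holomorphism of $(\C^*)^n$ onto the torus $T = \V^*(\boldz^B - \boldone) \subset (\C^*)^m$ under which $\V^*(F)$ corresponds to $\V^*(G)$ for
\[
    G(\boldz) = (\boldz^B - \boldone, \; \boldc_1 \cdot \boldz, \; \ldots, \; \boldc_q \cdot \boldz).
\]
Let $V' = \phi_A(V)$. Because a bi-holomorphism preserves dimension, irreducibility, smoothness, and multiplicity, $V'$ is a reduced irreducible $d$-dimensional component of $\V^*(G)$.

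Next, apply \Cref{thm:linear slicing system} with $k = d$ directly to the single component $V'$ rather than to the entire variety $\V^*(G)$ as in the previous lemma. Since $\dim V' = d = k$ and $V'$ is reduced, the proposition yields a nonempty Zariski open set of slicing data $(\boldc_1^*,\ldots,\boldc_d^*)$ for which the linear slice $L \cap V'$, with $L = \V(\mathcal{L}(\boldz;\boldc_1^*,\ldots,\boldc_d^*))$, is nonempty and $0$-dimensional, and each of its points is a multiplicity-$1$ (hence nonsingular) isolated zero of the augmented system $(G, \mathcal{L})$. The standard-form hypothesis that $A$ has a zero column ensures that every toric linear form $\boldc_k^* \cdot \boldx^A$ pulls back along $\phi_A$ to the affine linear form $\boldc_k^* \cdot \boldz$ on $T$, so the parameter space on the $F$-side is identified with the one used by the proposition on the $G$-side.

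Finally, push the conclusion back through $\phi_A^{-1}$. Because $\phi_A$ intertwines the two systems, $\V^*_0(F^{(d)}) \cap V$ coincides with $\phi_A^{-1}(L \cap V')$, and is therefore nonempty and finite; bi-holomorphic maps preserve smoothness, so each such point is a smooth point of $V$. The only item that calls for any verification is that the Zariski open condition supplied by \Cref{thm:linear slicing system}, phrased with slicing vectors ranging over $\C^{m+1}$, remains satisfiable after restricting the parameter space to $(\C^*)^{m \times d}$; but the latter is itself Zariski open in the ambient affine space, so its intersection with the open set from the proposition is again Zariski open and nonempty.
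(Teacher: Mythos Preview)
Your proposal is correct and takes essentially the same approach as the paper. The paper does not give a separate proof of the corollary; it simply remarks that ``by restriction, the above constructions can still be applied to each individual reduced irreducible component of $\V^*_d(F)$,'' and you have faithfully unpacked that sentence---transporting via \Cref{lem:toric-linear}, invoking \Cref{thm:linear slicing system} on the single component $V'$, and pulling back---with somewhat more care about the parameter space than the paper itself exercises.
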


This justifies that a rank $d$ sample set of $F$
does indeed sample all reduced $d$-dimensional components of $\V^*(F)$.
The following subsections aim to set up an efficient homotopy algorithm
for computing each sample set as a direct extension of the
classical polyhedral homotopy.
Indeed, \emph{all} sample sets will be connected
through solution paths defined by a single homotopy.

%------------------------------------------------------------------------------
\subsection{Square system formulation}\label{sec: square formulation}

The toric slicing system \eqref{equ:toric slicing} (in \Cref{def:toric slicing})
is a system of $q + d$ Laurent polynomials in $n$ variables.
While it is possible to study it directly,
it is much more convenient to turn it into a square system.
In the following, let $r = n - q$.
As noted in \Cref{sec: randomization},
we only need to focus on cases where $n \ge q$, and hence $r \ge 0$.
From \Cref{thm:randomization}, we can derive the following result.

\begin{lemma}
    If $d > r$,
    let $\Lambda = [\lambda_{i,j}]$ be a $n \times (d-r)$ matrix
    and consider the square system
    \begin{equation*}
        F^{(d)}_{\square} (x_1,\dots,x_n) =
        F^{(d)}_{\square} (\boldx) =
        \begin{cases}
            \left(\boldc_i   + \sum_{k=r+1}^{d} \lambda_{i,  k} \boldc^*_k \right) \cdot \boldx^A
            &\text{for } i = 1,\ldots,q \\[1ex]
            \left(\boldc_i^* + \sum_{k=r+1}^{d} \lambda_{q+i,k} \boldc^*_k \right) \cdot \boldx^A
            &\text{for } i = 1,\ldots,r
        \end{cases}
    \end{equation*}
    For generic choices of $\Lambda$,
    all  isolated points in $\V^*(F^{(d)})$ are
    also isolated points in $\V^*(F^{(d)}_{\square})$.
    Furthermore, $\V^*(F^{(d)})$ and $\V^*(F^{(d)}_{\square})$ have the 
    same set of positive-dimensional components.
\end{lemma}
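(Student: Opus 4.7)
The plan is to reduce the lemma to the single statement that, for a Zariski-generic choice of $\Lambda$, the ``extra'' set
\[
    E := \V^*(F^{(d)}_{\square}) \setminus \V^*(F^{(d)})
\]
is finite. The inclusion $\V^*(F^{(d)}) \subseteq \V^*(F^{(d)}_{\square})$ is automatic because each row of $F^{(d)}_{\square}$ is a $\Lambda$-linear combination of rows of $F^{(d)}$, so every irreducible component of $\V^*(F^{(d)})$ is already contained in some component of $\V^*(F^{(d)}_{\square})$. The real work is to control what, beyond this, can appear in $\V^*(F^{(d)}_{\square})$.

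My first step would be to localize $E$. Writing $f_i(\boldx) = \boldc_i \cdot \boldx^A$ and $f_k^*(\boldx) = \boldc_k^* \cdot \boldx^A$, let
\[
    W := \{\boldx \in (\C^*)^n : f_k^*(\boldx) \ne 0 \text{ for some } k \in \{r+1,\dots,d\}\}.
\]
If $\boldx \notin W$, then every randomizing sum $\sum_{k=r+1}^{d} \lambda_{\bullet,k} f_k^*(\boldx)$ vanishes identically, so $F^{(d)}_{\square}(\boldx) = \boldzero$ forces $f_i(\boldx) = 0$ for $i=1,\dots,q$ and $f_i^*(\boldx) = 0$ for $i=1,\dots,r$; together with the hypothesis $f_k^*(\boldx) = 0$ for $k = r+1,\dots,d$, this places $\boldx$ in $\V^*(F^{(d)})$. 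Hence $E \subseteq W$, regardless of $\Lambda$.

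The main technical step is a standard incidence-variety dimension count on
\[
    \mathcal{Z} := \{(\Lambda,\boldx) \in \C^{n(d-r)} \times W : F^{(d)}_{\square}(\boldx) = \boldzero\}.
\]
For each fixed $\boldx \in W$, the equation $F^{(d)}_{\square}(\boldx) = \boldzero$ is a system of $n$ affine-linear equations in $\Lambda$, one per row; because different rows involve disjoint free variables $\lambda_{i,k}$ and at least one $f_k^*(\boldx)$ is nonzero on $W$, these equations cut out an affine subspace of codimension exactly $n$ in $\C^{n(d-r)}$. Thus $\dim \mathcal{Z} \le \dim W + n(d-r) - n = n(d-r)$, which equals the dimension of the parameter space. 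By upper-semicontinuity of fiber dimension, the projection $\mathcal{Z} \to \C^{n(d-r)}$ has generic fiber of dimension at most zero, so there is a Zariski open dense set of $\Lambda$'s for which $E$ is finite. (This is essentially the specialization of \Cref{them:randomization} to our constrained $\Lambda$ via an explicit dimension count.)

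From finiteness of $E$, both conclusions drop out. A positive-dimensional irreducible component of $\V^*(F^{(d)}_{\square})$ cannot fit inside the finite set $E$, so it is contained in $\V^*(F^{(d)})$; combining this with the reverse inclusion and a brief maximality argument identifies it with a positive-dimensional irreducible component of $\V^*(F^{(d)})$, and the correspondence is bijective. An isolated zero of $\V^*(F^{(d)})$ lies in $\V^*(F^{(d)}_{\square})$ by the trivial inclusion, and if it failed to be isolated there it would sit inside some positive-dimensional component of $\V^*(F^{(d)}_{\square})$, which by the previous sentence would also be a positive-dimensional component of $\V^*(F^{(d)})$, contradicting isolation. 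The main obstacle I anticipate is exactly the dimension count: the matrix $\Lambda$ is heavily constrained (its first $n$ columns are fixed as the identity, leaving only the last $d-r$ columns free), so one must confirm that the $n$ affine conditions imposed at a point of $W$ are genuinely independent; the disjointness of free parameters across rows makes this independence manifest and is the combinatorial core of the argument.
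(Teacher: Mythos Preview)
Your argument is correct. The incidence-variety count is sound: on $W$ the $n$ affine equations in $\Lambda$ use disjoint blocks of variables and each has a nonzero coefficient, so the fibers over $W$ are affine subspaces of the claimed codimension, $\dim\mathcal{Z}\le n(d-r)$, and generic $\Lambda$ gives finite $E$. The deduction of both conclusions from finiteness of $E$ is clean; your closure-and-maximality step (any positive-dimensional component of $\V^*(F^{(d)}_{\square})$ meets $\V^*(F^{(d)})$ in a dense set, hence is contained in it, hence is a component there) is exactly what is needed.

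The paper does not write out a proof at all: it simply says the lemma is derived from the general randomization theorem (\Cref{them:randomization}) and moves on. Your route is therefore genuinely different in that you supply an explicit, self-contained dimension count rather than invoking the black box. This buys you something the citation alone does not: \Cref{them:randomization} is stated for a \emph{fully} generic $k\times q$ matrix, whereas here the randomizing matrix has the constrained shape $[\,I_n \mid \Lambda\,]$ with only the last $d-r$ columns free. Your argument handles that constraint directly, while the paper's one-line appeal implicitly relies on the (standard but unstated) observation that row operations do not change zero sets, so a generic full-rank $n\times(q+d)$ matrix can be row-reduced to this form. Either approach works; yours is more transparent about why the restricted genericity suffices.
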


This transformation turns a toric slicing system into a square system
while capturing all the $\C^*$-zeros.
It is possible for this transformation to introduce extraneous zeros,
i.e., isolated points that are in $\V^*(F^{(d)}_\square) \setminus \V^*(F^{(d)})$.
Therefore, $\V^*(F^{(d)}_\square)$ will be referred to as a
\term{rank $d$ sample superset} of $F$.
However, as we shall see, these extraneous points are far from useless.
On the contrary, they are crucial in our construction of homotopy paths
that will chain all sample sets together.
But first, an explanation of the term ``rank'' is in order.

\begin{remark}[The meaning of ``rank'']\label{rmk: square system}
    In the special case of $n = q$, i.e., $F$ being a square system,
    the corresponding square system can be expressed concisely as
    \[
        F^{(d)}_{\square}(\boldx) =
        (C + \Lambda \, C^*) (\boldx^A)^\top
    \]
    where $C,C^*,\Lambda$ are complex matrices of sizes
    $n \times m$, $d \times m$, and $n \times d$, respectively, given by
    \begin{align*}
        C &= 
        \left[
        \begin{smallmatrix}
            \boldc_1 \\
            \vdots \\
            \boldc_n
        \end{smallmatrix}
        \right],
        &
        C^* &=
        \left[
        \begin{smallmatrix}
            \boldc^*_1 \\
            \vdots \\
            \boldc^*_d
        \end{smallmatrix}
        \right],
        &
        \Lambda &=
        \left[
        \begin{smallmatrix}
            \lambda_{11} & \cdots & \lambda_{1d} \\
            \vdots & \ddots & \vdots \\
            \lambda_{n1} & \cdots & \lambda_{nd}
        \end{smallmatrix}
        \right]
        % &
        % \diag(\boldt) &=
        % \begin{bmatrix}
        %     t_1 &        &     \\
        %         & \ddots &     \\
        %         &        & t_n
        % \end{bmatrix}
        .
    \end{align*}
    In this form, it is easy to see that
    $F^{(d)}_{\square}(\boldx)$ is exactly a perturbed version of the original system $F$
    in which the coefficient matrix $C$ is replaced by $C+\Lambda C^*$
    where $\Lambda C^*$ is a generic matrix of rank $d$.
    This interpretation justifies the usage of the term ``rank'' in
    ``rank $d$ sample superset''.

    This perspective suggests stratifying the coefficient space
    by the rank of the perturbation,
    where the original system is the rank $0$ stratum and
    generic systems form the rank $n$ stratum.
    The central idea is to construct a single polyhedral homotopy that
    moves downward through these strata to compute all sample sets at once.
\end{remark}

%-------------------------------------------------------------------------------
\subsection{Stratified polyhedral homotopy}\label{sec: stratified}

We now construct the homotopy method that can compute the rank $d$ sample superset
$\V^*_0(F^{(d)}_\square)$, which contains the rank $d$ sample sets of $F$,
for each $d=1,\ldots,n$ using a single homotopy procedure.
The first component in this procedure is the natural connection between
consecutive sample supersets.

\begin{definition}
    For the $F^{(d)}_{\square}$ defined above,
    we define $H^{(d)} : \C^n \times \C \to \C^n$, given by
    \begin{equation}\label{equ:stratified}
        H^{(d)}_i(\boldx,t) = 
        \left\{
        \begin{aligned}
            \left(\boldc_i   + \sum_{k=r+1}^{d-1} \lambda_{ik}    \boldc^*_k + t \lambda_{i,d}   \boldc^*_d \right) &\cdot \boldx^A 
            \quad\text{for } i = 1,\ldots,q \\
            \left(\boldc_i^* + \sum_{k=r+1}^{d-1} \lambda_{q+i,k} \boldc^*_k + t \lambda_{q+i,d} \boldc^*_d \right) &\cdot \boldx^A
            \quad\text{for } i = 1,\ldots,r \\
        \end{aligned}
        \right.
    \end{equation}
\end{definition}

Clearly, $H^{(d)}(\boldx, 0) \equiv F^{(d-1)}_{\square}(\boldx)$ 
and      $H^{(d)}(\boldx, 1) \equiv F^{(d)  }_{\square}(\boldx)$.
Furthermore, by restricting $t$ to the real interval $[0,1]$,
we get a homotopy function between $F^{(d-1)}_{\square}$ and $F^{(d)}_{\square}$ since
$H^{(d)}$ is continuous in both $\boldx$ and $t$.
We shall show that the isolated $\C^*$-zeros of $H^{(d)}$ also move smoothly,
as $t$ goes from 1 to 0, forming smooth solution paths in $(\C^*)^n \times (0,1]$.

\begin{theorem}
    For generic $\boldc^*_k$'s and $\{\lambda_{i,j}\}$,
    the zero set of $H^{(d)}$ in $(\C^*)^n \times (0,1]$ consists of
    finitely many smooth solution paths
    emanating from the nonsingular points of $\V^*_0( F^{(d)}_\square )$ at $t=1$,
    and the set of their finite limit points as $t \to 0$
    contains \emph{all} points in $\V^*_0( F^{(d-1)}_\square )$.
\end{theorem}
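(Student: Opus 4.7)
The plan is to recognize $H^{(d)}(\boldx,t)$ as an instance of a parameter homotopy in the sense of \Cref{thm: parameter homotopy} and then invoke that theorem. Let $\boldp$ range over the coefficient vectors of square Laurent systems with common support matrix $A$. The coefficients of $H^{(d)}$ are affine in $t$, so the assignment $t \mapsto \boldp(t)$ traces a complex-analytic line in this parameter space, with $\boldp(0)$ encoding $F^{(d-1)}_{\square}$ and $\boldp(1)$ encoding $F^{(d)}_{\square}$. Since $H^{(d)}$ is unmixed with Newton polytope $\newt(F)$, the generic nonsingular root count $\mathcal{N}$ supplied by \Cref{thm: parameter homotopy} is finite (bounded by $\mvol(\newt(F),\ldots,\newt(F))$ via Bernshtein's theorem), so the solution paths to be produced are automatically finite in number.

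Next, I would verify the genericity hypothesis that the real segment $\{\boldp(t) : t \in (0,1]\}$ avoids the exceptional algebraic set $P^*$ provided by \Cref{thm: parameter homotopy}. Write the perturbation direction as $\boldp(1)-\boldp(0) = \boldp'$, a linear function of the generic data $\boldc^*_d$ and the generic column $(\lambda_{i,d})_{i=1}^{n}$. Since $P^*$ is contained in a proper Zariski-closed subset of coefficient space, the preimage of $P^*$ in the product space of choices $(\lambda_{i,d}, \boldc^*_d, t)$ is also a proper Zariski-closed subset. By standard dimension counting, for generic $(\lambda_{i,d}, \boldc^*_d)$ the complex line $\boldp(t)$ intersects $P^*$ in at most a finite set of $t$-values, and a further generic rescaling (or, equivalently, a generic choice of the leading entries of $\Lambda$ and of $\boldc^*_d$) ensures that none of these exceptional $t$-values lies in the real interval $(0,1]$. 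In particular, $\boldp(1)$ is generic by construction, while $\boldp(0)$ may be exceptional — which is fine, since we only claim the accessibility statement at $t=0$.

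With the path $\boldp(t)$ avoiding $P^*$ on $(0,1]$, items (4) and (5) of \Cref{thm: parameter homotopy} apply verbatim: the homotopy $H^{(d)}(\boldx,t) = \boldzero$ defines $\mathcal{N}$ continuous, nonsingular solution paths over $(0,1]$, smooth in $t$, whose values at $t=1$ are exactly the nonsingular isolated zeros of $F^{(d)}_{\square}$ — i.e., the nonsingular points of $\V^*_0(F^{(d)}_{\square})$ — and whose limits as $t \to 0^+$ include every nonsingular isolated zero of $F^{(d-1)}_{\square}$. Restricting from $\C^n$ to $(\C^*)^n$ preserves smoothness and nonsingularity, yielding the stated conclusion.

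The main obstacle I expect is the genericity bookkeeping in the second step, specifically confirming that the particular affine line $\boldp(t)$ produced by our rank-one increment is sufficiently ``transverse'' to $P^*$ so that the bad parameter values can be forced off the real interval $(0,1]$. This reduces to a standard application of the fact that dominant polynomial maps pull back proper Zariski-closed sets to proper Zariski-closed sets, and that a one-parameter family of generic lines in an ambient parameter space intersects any fixed proper subvariety in only finitely many points. The remaining details — continuity of $H^{(d)}$ in $(\boldx,t)$, finiteness of $\mathcal{N}$, and the match between path endpoints and sample supersets — follow immediately from the setup and from the BKK framework reviewed in \Cref{sec: preliminaries}.
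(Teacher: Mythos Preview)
Your overall strategy---cast $H^{(d)}$ as a parameter homotopy and invoke \Cref{thm: parameter homotopy}---is exactly what the paper does. However, your choice of parameter space introduces a genuine gap that the paper's proof sidesteps.

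You let $\boldp$ range over the \emph{full} coefficient space of unmixed square systems with support $A$, so your exceptional set $P^*$ is the locus where the nonsingular isolated root count falls below the BKK bound $\mathcal{N}_{\text{full}}$. You then assert that ``$\boldp(1)$ is generic by construction.'' This is false in general. The coefficient matrix of $F^{(d)}_\square$ is $C$ plus a rank-$d$ perturbation, which for $d<n$ is far from generic in $\C^{n\times m}$. Concretely, whenever $\V^*(F)$ has an irreducible component of dimension strictly greater than $d$, the system $F^{(d)}_\square$ inherits a positive-dimensional component (this is precisely what \Cref{them:randomization} and the slicing lemmas say), and the \emph{entire} segment $\boldp(t)$, $t\in(0,1]$, sits inside your $P^*$. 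For an extreme illustration take $n=3$ and $F=(f,f,f)$: then $H^{(1)}(\boldx,t)=\boldzero$ reduces to $\{f=0,\ \boldc^*_1\cdot\boldx^A=0\}$ for every $t\in(0,1]$, a curve with no isolated zeros, so $\boldp(t)\in P^*$ identically and items (4)--(5) of \Cref{thm: parameter homotopy} never apply in your parametrization.

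The paper avoids this by choosing a much smaller parameter space: it fixes all of $\boldc_i$, $\lambda_{i,j}$, and $\boldc^*_k$ for $k<d$, and lets only the single vector $\mathbf{p}\in\C^m$ (standing in for $t\boldc^*_d$) vary, writing $H^{(d)}(\boldx,t)=G(\boldx,t\boldc^*_d)$. In this $m$-dimensional family the generic count $\mathcal{N}$ is whatever it is---possibly far below the BKK bound---and $\mathbf{p}=\boldc^*_d$ is generic in $\C^m$ \emph{by hypothesis}, so $\boldp(1)$ automatically lies outside the (smaller) exceptional set $Q$. The line $t\mapsto t\boldc^*_d$ through the origin then avoids $Q$ on $(0,1]$ for almost all $\boldc^*_d$ by the standard real-dimension count (Lemma~7.1.2 of \cite{SommeseWampler2005Numerical}). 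Your ``genericity bookkeeping'' worry was well-placed; the resolution is not a rescaling trick but this judicious shrinking of the parameter space.
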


\begin{proof}
    Define $G = (G_1,\ldots,G_n) : (\C^*)^n \times \C^m \to \C^n$ given by
    \begin{equation*}
        G_i(\boldx,\mathbf{p}) = \left\{
        \begin{aligned}
            \left(\boldc_i   + \sum_{k=r+1}^{d-1} \lambda_{ik}    \boldc^*_k + \lambda_{i,d}   \, \mathbf{p} \right) &\cdot \boldx^A
            \quad\text{for } i = 1,\ldots,q \\
            \left(\boldc_i^* + \sum_{k=r+1}^{d-1} \lambda_{q+i,k} \boldc^*_k + \lambda_{q+i,d} \, \mathbf{p} \right) &\cdot \boldx^A
            \quad\text{for } i = 1,\ldots,r
        \end{aligned}
        \right.
    \end{equation*}
    which represents a family of Laurent systems parametrized by
    $\mathbf{p} \in \C^m$ that contains $H^{(d)}(\boldx,t)$ for all $t$
    since $H^{(d)} (\boldx,t) = G(\boldx, t \, \boldc_d^*)$.
    By the Parameter Homotopy Theorem~(\Cref{thm: parameter homotopy}),
    for a generic $\mathbf{p} \in \C^m$,
    the total number of nonsingular points in $\V^*_0( G(\boldx, \mathbf{p}) )$ is finite,
    and it is the same number, say $\mathcal{N}$.
    The exceptional set $Q$ of the parameters for which the number of nonsingular points
    in $\V^*_0( G(\boldx, \mathbf{p}) )$ is less than $\mathcal{N}$
    is contained in a proper algebraic set.
    In particular, at $t=1$, $H^{(d)}(\boldx,1) = G(\boldx,\boldc^*_d)$,
    so for generic choices of $\boldc^*_d$, the total number of starting points,
    i.e., the isolated points in $\V^*(H^{(d)}(\cdot,1)) = \V^*(F^{(d)}_\square)$ is
    exactly $\mathcal{N}$.
    Our focus is therefore the path between
    $\mathbf{p} = \boldc^*_d$ to $\mathbf{p} = \boldzero$
    parametrized by
    $\mathbf{p}(t) = t \boldc^*_d = (1-t) \boldzero + t \boldc^*_d$.

    For a generic $\boldc^*_d$,
    this path avoids the exceptional set $Q$ in the parameter space
    \cite[Lemma 7.1.2]{SommeseWampler2005Numerical}.
    From \Cref{thm: parameter homotopy}, again,
    as $t$ goes from 1 to 0,
    the nonsingular isolated solutions to $H^{(d)}(\boldx,t) = 0$
    form exactly $\mathcal{N}$ smooth paths (smoothly parametrized by $t$)
    emanating from points in $\V^*_0( F^{(d)}_{\square} )$
    and reach all isolated points of $\V^*_0( F^{(d-1)}_{\square} )$ as limit points.
\end{proof}

The homotopy continuation procedure that tracks the solution paths
defined by $H^{(d)}$ as $t$ moves from 1 to 0
produces both the rank $d-1$ sample superset 
and the starting points for $H^{(d-1)}$.
This chain reaction thus can continue until
$H^{(r+1)}$ produces the rank $r$ (the lowest rank) sample superset for $F$.
This is the stratified polyhedral homotopy.

\begin{definition}[Unmixed stratified polyhedral homotopy]
    \label{def: stratified polyhedral homotopy}
    For an unmixed system $F$ \eqref{equ:unmixed-sys}, in standard form,
    of $q$ Laurent polynomials in $n$ variables $\boldx = (x_1,\dots,x_n)$
    and generic lifting function $\boldomega : \supp(F) \to \Q^+$,
    we define $H = (H_1,\ldots,H_n) : (\C^*)^n \times \C^{q+1} \to \C^n$ given by
    \begin{equation}\label{equ:cascade}
        H_i(\boldx,\boldt) =
        \left\{
        \begin{aligned}
            \left(\boldc_i   + \sum_{k=r+1}^{n} t_{k-r} \lambda_{ik}    \boldc^*_k \right) &\cdot (\boldx^A \circ e^{-M t_0 \boldomega}) 
            \quad\text{for } i = 1,\ldots,q \\
            \left(\boldc_i^* + \sum_{k=r+1}^{n} t_{k-r} \lambda_{q+i,k} \boldc^*_k \right) &\cdot (\boldx^A \circ e^{-M t_0 \boldomega}) 
            \quad\text{for } i = 1,\ldots,r \\
        \end{aligned}
        \right.
    \end{equation}
    where $\boldt = (t_0,t_1,\dots,t_q)$ and $M$ is a sufficiently large positive real number.
\end{definition}

Here, ``$\circ$'' denotes the entry-wise product between two row vectors
of the same length, which is the group operation for $(\C^*)^m$.
The constant $M \in \R^+$ is the same constant used in \eqref{equ: 2-step polyhedral exponential},
which can be computed from the Newton polytope of $H$.

The starting points of the homotopy paths at $\boldt = (1,\ldots,1)$
can be obtained by the same process that bootstraps the polyhedral homotopy
(a brief review of this process is included in \Cref{appendix: bootstrapping} of the appendix for completeness).
Indeed, all $\C^*$-zeros of $H(\boldx,(1,\ldots,1))$ are isolated and nonsingular
and the total number is exactly the \emph{normalized volume}
\[
    n! \evol(\conv(\supp(F))),
\]
of the common Newton polytope $\conv(\supp(F))$.
To obtain sample supersets of all ranks,
we could apply the standard continuation procedure 
along the piecewise-linear parameter path
\[
    (1,\dots,1) \to
    (0,1,\dots,1) \to
    (0,0,1,\dots,1) \to
    \cdots
    (0,\dots,0,1) \to
    (0,\dots,0),
\]
in the $\boldt$-space,
starting from the initial points provided by the
bootstrapping process of polyhedral homotopy.
The parameter path consists of $q+1$ linear segments.
At the end of each segment, the projection of the solution paths
onto the $\boldx$-coordinates generates the sample supersets for $F$
of ranks $n,n-1,\dots,n-q$.
% Note that this homotopy can be formulated as a single homotopy function
% \begin{equation}
%     H^*(\boldx,s) =
%     \begin{cases}
%         H(\boldx,(1 - (1-s)(q+1),1,1,\ldots,1) & 1 \ge s > 1-1/(q+1)) \\
%         H(\boldx,(0,2 - (1-s)(q+1),1,\ldots,1) & 1-1/(q+1) \ge s > 1-2/(q+1)) \\
%         \hfil \vdots &\hfil \vdots \\
%         H(\boldx,(0,\ldots,0,n+1 - (1-s)(q+1)) & 1-n/(q+1) \ge s > 0).
%     \end{cases}
% \end{equation}
We summarize this algorithm in \Cref{alg:unmixed stratified}.

\begin{algorithm}
    \caption{Unmixed stratified polyhedral homotopy algorithm for regular zeros}
    \label{alg:unmixed stratified}
    \begin{algorithmic}[1]
        \REQUIRE{
            An unmixed system $F$ of $q$ Laurent polynomials in standard form,
            lifting function $\boldomega : S \to \Q^+$ with generic images,
            and generic complex vectors $\boldc_1^*,\ldots,\boldc_n^* \in \C^m$
        }
        \ENSURE{
            Returns finite sample sets $(W_n,\ldots,W_r)$ such that,
            for $d = r,\ldots,n$,
            $W_d \subseteq \V^*_d(F)$ and $W_d$ intersects each $d$-dimensional reduced irreducible component of $\V^*(F)$.
            % and $W_d = \varnothing$ if and only if $V^*_d(F)$ contains no reduced irreducible component
        }
        \STATE{Define $\tilde{X}_{-1} = \text{PolyhedralBootstrap}(F,\boldomega)$}
        \STATE{Define $W_{n+1} = \varnothing$}
        \STATE{Define $\boldt = (t_0,t_1,\dots,t_q) = (1,1,\dots,1)$}
        \FOR{ $k = 0,\dots,q$ }
            \STATE{Define $X_{k} = \text{HomotopyContinuation}(H,\tilde{X}_{k-1} \setminus W_{n-k+1}, \boldt; \, t_k : 1 \to 0)$}
            \label{line: raw end points}
            \STATE{Define $\tilde{X}_{k} = \{ \boldx \in X_k \mid DF_\square^{(n-k)}(\boldx) \text{ is nonsingular} \}$}
            \label{line: regular filter}
            \STATE{Define 
                $W_{n-k} = \{ 
                    \boldx \in \tilde{X}_k \mid F^{(n-k)}(\boldx) = \boldzero
                    \text{ and }
                    \rank DF^{(n-k)}(\boldx) = n - k
                \}$}
            \label{line: sample set}
            \STATE{Let $t_k = 0$}
        \ENDFOR
    \RETURN $(W_n,\dots,W_r)$
    \end{algorithmic}
\end{algorithm}

In this description,
the \textsf{PolyhedralBootstrap} is the subroutine for bootstrapping
the polyhedral homotopy (see \Cref{appendix: bootstrapping})
for $F$ and a generic lifting function $\boldomega$.
It provides the isolated $\C^*$-solutions to
$H(\boldx,(1,\ldots,1)) = \boldzero$.
The subroutine \textsf{HomotopyContinuation}$(H,\tilde{X}_k,\boldt,t_k : 1 \to 0)$
is the standard algorithm for tracking the paths defined by the equation $H = \boldzero$
in $\C^n \times (0,1]$ starting from the points in $\tilde{X}_k$
at $t_k = 1$ toward $t_k \to 0$
while keeping other parameters in $\boldt = (t_0,\ldots,t_q)$ constant.
The limit points within $(\C^*)^n$ are returned.

\begin{remark}
    An important observation is that since the solution paths are continuous,
    under the genericity assumptions,
    once a path reaches an irreducible component of $\V^*(F)$,
    it will never leave.
    Therefore, in \cref{line: raw end points} of \Cref{alg:unmixed stratified},
    it is only necessary to track the paths starting from
    $\tilde{X}_{k-1} \setminus W_{n-k+1}$,
    i.e., the points that are not already in the previous sample sets.
    % This optimization can significantly reduce the number of paths to be tracked.
\end{remark}

%-------------------------------------------------------------------------------
\subsection{Numerical considerations}\label{sec: numerical}

In practice, homotopy continuation methods are generally implemented numerically,
and the sets $\tilde{X}_{k}$ in \Cref{alg:unmixed stratified}
are only numerical approximations of the zeros in question.
Therefore,
% In \Cref{alg:unmixed stratified},
the condition that $F^{(n-k)}(\boldx) = \boldzero$,
in \cref{line: sample set},
and the rank conditions in \cref{line: regular filter,line: sample set}
must be replaced by numerically well-posed conditions.

For example, the condition $F^{(n-k)}(\boldx) = \boldzero$
may be replaced by the numerically meaningful backward error condition
that $F^{(n-k)}_\epsilon(\boldx) = \boldzero$
for some threshold $\epsilon > 0$
and Laurent system $F^{(n-k)}_\epsilon$ with the same support
such that $\| F^{(n-k)}_\epsilon - F^{(n-k)} \| < \epsilon$.

Similarly, the rank condition for the Jacobian matrices
$DF_\square^{(n-k)}(\boldx)$ and $DF^{(n-k)}(\boldx)$
may be replaced by imposing a bound on the condition number of $DF^{(n-k)}(\boldx)$.
A more robust solution is to frame these problems
as well-studied \emph{rank-revealing problems}
\cite{Rank-revealing}.

%-------------------------------------------------------------------------------
\subsection{Truly stratified polyhedral homotopy}\label{sec: combined steps}

\Cref{alg:unmixed stratified} is presented to have the steps
operating in serial along the piecewise-linear parameter path.
In practice, this arrangement is neither necessary nor efficient,
since we generally have good a priori knowledge or an educated guess
about the maximum dimension of the zero sets.
At the very least, unless the system $F$ in $n$ variables is trivial,
the dimension of its $\C^*$-zero set must be strictly less than $n$.
In this case, there is no need to directly compute the rank $n$ sample superset,
and it is sufficient to track the solution paths over the modified parameter path
that starts with the line segment
\[
    (1,\ldots,1) \to
    (0,0,1,\ldots,1) \to
    % (0,0,0,1,\ldots,1) \to
    \cdots
    % (0,\ldots,0)
\]
in \Cref{alg:unmixed stratified},
i.e., the line segment given by $s \mapsto (s,s,1,\ldots,1)$.
Along this, the polyhedral homotopy and the perturbation of coefficients
operate simultaneously,
and at the end of this line segment,
a rank $n-1$ sample superset is produced.
This should be the default procedure.

In general, if it is known that $\dim \V^*(F)$ is no more than $d_{\max}$,
then it is sufficient to track the solution paths over the parameter path
that starts with the line segment
\[
    (1,\ldots,1) \to
    (\, 
        \underbrace{0,\ldots,0}_{n - d_{\max}+1}
        \,,\,
        \underbrace{1,\ldots,1}_{q - n + d_{\max}}) \to
    % (0,0,0,1,\ldots,1) \to
    \cdots
    \to
    (0,\ldots,0)
\]
At the end of this first segment,
a rank $d_{\max}$ sample superset is produced, which necessarily contains
sample points for each reduced $d_{\max}$-dimensional irreducible component
of $\V^*(F)$.

%===============================================================================
\section{Reducing general cases to standard unmixed cases}
\label{sec: general cases}

The constructions presented so far require the target Laurent system
to be of the ``standard unmixed form'' (\Cref{def: standard form}).
This section describes how general cases can be handled.
As reviewed in~\Cref{sec: standard unmixed case},
condition~\ref{cond: nonbinomial} eliminates trivial cases,
while
conditions~\ref{cond: constant term} and~\ref{cond: full rank}
can be satisfied by simple transformations.
Therefore, we can focus on satisfying the torsion-free
(condition~\ref{cond: torsion free} in \Cref{def: standard form})
and unmixedness conditions.

\subsection{Lattice reduction for nonstandard unmixed systems}\label{sec: lattice reduction}

To satisfy the torsion-free condition
(Condition \ref{cond: torsion free} in \Cref{def: standard form}),
the invariant factors of the support matrix $A$ must all be 1.
If not, we can construct a new support through a ``saturation'' process
\cite{Sturmfels1996Grobner}:
Let $d_1,\dots,d_n \ne 0$ be these invariant factors,
and let $P \in \zmat{n}{n}$ and $Q \in \zmat{m}{m}$ be the unimodular matrices
in the Smith Normal Form
\begin{align*}
    P A Q &= 
    \begin{bmatrix}
        D & \boldzero
    \end{bmatrix}
    &&\text{where}&
    D &=
    \left[
    \begin{smallmatrix}
        d_1 &        &     \\
            & \ddots &     \\
            &        & d_n
    \end{smallmatrix}
    \right].
\end{align*}
With these, we define matrices
\begin{align}
    L &= P^{-1} D P \in \zmat{n}{n}
    &
    \tilde{A} &= 
    \begin{bmatrix}
        P^{-1} & \boldzero
    \end{bmatrix}
    Q^{-1}
    \in \zmat{n}{m}.
\end{align}
Then $\tilde{A}$ also has full row rank,
and we can verify that
\[
    P\tilde{A}Q = 
    P \begin{bmatrix} P^{-1} & \boldzero \end{bmatrix} Q^{-1} Q
    =
    \begin{bmatrix}
        I & \boldzero
    \end{bmatrix}.
\]
That is, systems with support matrix $\tilde{A}$
would satisfy the torsion-free condition
(Condition~\ref{cond: torsion free} in \Cref{def: standard form}).
We introduce the new variables $\boldy = (y_1,\dots,y_n)$ via the relation
\begin{equation}
    \label{equ:y-x}
    \boldy = \boldx^L
\end{equation}
By \Cref{lem:binomial-roots}, 
this defines a $d$-fold cover over $(\C^*)^n$, where $d=d_1 \cdots d_n = \det L$.
That is, for each $\boldy \in (\C^*)^n$, there are precisely $d$
distinct choices of $\boldx \in (\C^*)^n$ that would satisfy the above equation.
With this change of variables
\[
    \boldy^{\tilde{A}} = (\boldx^L)^{\tilde{A}} = \boldx^{L\tilde{A}} =
    \boldx^{P^{-1}DP[P^{-1} \, \boldzero] Q^{-1}} =
    \boldx^{P^{-1} [ D \, \boldzero] Q^{-1}} =
    \boldx^{A}
\]
Therefore, via the change of variables \eqref{equ:y-x}, we can replace the
original Laurent polynomial system $F$ with support matrix $A$ 
by a new system in $\boldy$ with support matrix $\tilde{A}$
\[
    \tilde{F}(\boldy) =
    \begin{bmatrix}
        \boldc_1 \cdot \boldy^{\tilde{A}} \\
        \vdots \\
        \boldc_q \cdot \boldy^{\tilde{A}}
    \end{bmatrix}
\]
for which the stratified polyhedral homotopy defined in the previous section
can be applied,
and the $\C^*$-zero set $\V^*(F)$ is a $d$-fold cover over $\V^*(\tilde{F})$
defined by the map \eqref{equ:y-x}.

%===============================================================================
\subsection{Turning mixed cases into unmixed cases}\label{sec: turning mixed}

The description in \Cref{sec: standard unmixed case}
applies only to unmixed Laurent systems,
i.e., systems of Laurent polynomials with a common support.
This constraint can be removed easily by considering
generic linear combinations of the Laurent polynomials:
For a ``mixed'' Laurent system $F = (f_1,\ldots,f_q)$
in which $\supp(f_1),\ldots,\supp(f_q)$ are not identical,
with a generic $q \times q$ matrix $R$,
we can form an equivalent randomized system
\[
    F^R (\boldx) = R \, F(\boldx).
\]
Here, $F(\boldx)$ is considered as a column vector.
These two systems are equivalent in the sense that
$\V^*(F) = \V^*(F^R)$.
Yet, under the genericity assumption,
there is no cancellation of the terms in $R \, F$,
and hence $F^R$ is unmixed.
The stratified polyhedral homotopy construction
described in \Cref{sec: standard unmixed case}
can therefore be applied to the unmixed system $F^R$ instead.

Since the support of $R \, F$ is $S_1 \cup \cdots \cup S_q$,
where $S_i = \supp (f_i)$ for $i=1,\ldots,q$,
the number of paths defined by the stratified polyhedral homotopy,
i.e. the BKK bound of $F^R$, is
\begin{equation}\label{equ: unmixed BKK}
    n! \evol(\conv(S_1 \cup \cdots \cup S_q)).
\end{equation}
In the rest of this paper,
this bound will be referred to as the
\emph{Kushnirenko bound} to emphasize the fact that
the unmixed version of the BKK bound is used.

In summary, the framework developed here can also be applied
to mixed Laurent systems simply by considering random linear combinations
of the Laurent polynomials in the system.
We conclude this section with a few remarks on the more subtle points.

\begin{remark}
    In the case of $q = n$, i.e. $F$ being a square system,
    it is well known that
    \begin{equation}\label{equ: unmixed vs mixed}
        n! \evol(\conv(S_1 \cup \cdots \cup S_n))
        \ge
        \mvol(\conv(S_1),\ldots,\conv(S_n)).
    \end{equation}
    This follows from the monotonicity of the mixed volume function.
    That is, the transformation $F \mapsto RF$
    may or may not increase the BKK bound,
    which is the number of homotopy paths defined by the stratified polyhedral homotopy.
    Conditions for the equality of the two were first discovered by M. Rojas in 1994
    \cite{Rojas1994Convex}.
    Variations of these conditions have since been rediscovered several times
    \cite{BihanSoprunov2019Criteria,Chen2019Unmixing}.
    As listed in Ref.~\cite{Chen2019Unmixing},
    for many important families of Laurent systems
    from applications,
    the two sides of \eqref{equ: unmixed vs mixed} are identical,
    and thus the randomization process
    does not inflate the number of homotopy paths
    one has to track.
\end{remark}

\begin{remark}
    It should be noted that the transformation $F \mapsto RF$
    is not invariant under lattice translations of the supports,
    even though the $\C^*$-zero set they define is:
    For the Laurent system $F = (f_1,\ldots,f_q)$
    and any set of Laurent monomials
    $\boldx^{\boldv_1},\ldots,\boldx^{\boldv_q}$,
    with $\boldv_1,\ldots,\boldv_q \in \Z^n$,
    the Laurent system $(\boldx^{\boldv_1}\,f_1,\ldots,\boldx^{\boldv_q}\,f_q)$
    also has the exact same $\C^*$-zero set.
    Yet, the randomized system
    $R (\boldx^{\boldv_1}\,f_1(\boldx),\ldots,\boldx^{\boldv_q}\,f_q(\boldx))^\top$
    can be quite different from $F^R = R F$.
    In particular, the Kushnirenko bound \eqref{equ: unmixed BKK},
    i.e. the number of paths the stratified polyhedral homotopy will define,
    may be different depending on the choices of $\boldv_1,\ldots,\boldv_q$.
    Finding the optimal choice so that
    $n! \evol(\conv(S_1 + \boldv_1 \cup \cdots \cup S_q + \boldv_q))$
    is minimized is still an open problem.
\end{remark}

%===============================================================================
% \section{Connection to higher level algorithms}\label{sec: irreducible decomposition}

%===============================================================================
\section{Decomposition of the BKK bound}\label{sec: BKK decomposition}

Bernshtein's first theorem (\Cref{thm:bernshtein-a})
states that for a system of $n$ Laurent polynomials
$(f_1,\ldots,f_n)$ in $n$ variables,
the number of isolated zeros in $(\C^*)^n$
is bounded by the mixed volume
$\mvol_n(P_1,\ldots,P_n)$,
where $P_1,\ldots,P_n$ are the Newton polytopes of $f_1,\ldots,f_n$, respectively.
It equals the normalized volume $n!\evol(P)$
in the unmixed case, i.e., when $P_1,\ldots,P_n = P$ (\Cref{thm:kushnirenko}).
This is the BKK bound.
Indeed, for generic coefficients,
all $\C^*$-zeros are isolated,
and this bound is exact.
When positive-dimensional components are present, however,
the number of isolated $\C^*$-zeros will be strictly less than this bound.
A natural question to ask in this situation is
whether it is possible to decompose the BKK bound
as a sum of local contributions from each irreducible component.

This question mirrors the classical question on 
decomposing the B\'ezout number.
In the 17th century,
Newton already observed that
the number of intersections
between two planar curves of degrees $d_1,d_2$
is bounded by $d_1 \cdot d_2$ \cite{Newton1999Principia}.
In the 18th century, B\'ezout proved this upper bound to be exact
when the curves are in general positions,
and the same bound applies to the isolated zeros
of a system of $n$ polynomials in $\mathbb{CP}^n$ \cite{Struik2014Source}.
This is the B\'ezout bound.
Indeed, when there are no positive-dimensional components
and intersections are counted with multiplicities,
this bound is always exact.
When positive-dimensional components are present, however,
the naive interpretation of this bound breaks down.
The search for a decomposition of the B\'ezout bound
into local contributions from all components thus began.

Among a variety of different approaches
in constructing such a decomposition of the B\'ezout bound,
the dynamic approach proposed by F. Severi \cite{Severi1947}
and subsequently corrected by R. Lazarsfeld \cite{Lazarsfeld1981Excess}
is the most relevant here.
By assigning a multiplicity to each
subvariety of the projective zero set of a system,
they established such a decomposition of the B\'ezout bound.

The stratified polyhedral homotopy method
described above produces a similar assignment of
multiplicity as a by-product,
at least for unmixed cases involving reduced components.
First, through a routine application of the Parameter Homotopy Theorem
(\Cref{thm: parameter homotopy}),
we can verify that the number of points in each of the sample sets
$W_n,\ldots,W_0$, produced by \Cref{alg:unmixed stratified},
is independent of the choices of the generic coefficients.

\begin{proposition}
    If all components of $\V^*(F)$ of dimension $d$ are (generically) reduced,
    then for generic choices of $\boldc^*_1,\ldots,\boldc^*_n \in \C^m$,
    the number of distinct points in the rank-$d$ sample set $W_d$ is a constant
    that is independent of the choices of $\boldc^*_1,\ldots,\boldc^*_n$.
\end{proposition}

Since each point in the sample sets is produced by a homotopy path,
and the total number of paths is the Kushnirenko bound
\eqref{equ: unmixed BKK},
by counting points in each $W_{d_i}$,
we have a crude extension of this bound
that counts the contributions from components of each dimension.

\begin{proposition}
    Suppose the $\C^*$-zero set of a Laurent system $(f_1,\ldots,f_n)$
    consists of components $C_{d_1},\ldots,C_{d_\ell} \ne \varnothing$
    where each $C_{d_i}$ is the union of all $d_i$-dimensional components.
    Let $S = \supp(f_1) \cup \cdots \cup \supp(f_n)$ and
    $\jmath(C_{d_i}) = |W_{d_i}|$, then $\jmath(C_{d_i}) > 0$ and
    \begin{equation}\label{equ: crude bkk decomposition}
        \sum_{i=1}^\ell \jmath(C_{d_i})
        \le
        n! \evol_n(\conv(S)).
    \end{equation}
\end{proposition}

This bound can be refined significantly.
By extending the function $\jmath$ to individual irreducible components
in each $C_{d_i}$ via restriction
(see the remark in \Cref{sec: irreducible decomposition} for the connection to
the stronger irreducible decomposition),
we have a more refined decomposition of the Kushnirenko bound
in terms of contributions from irreducible components.

In addition, by broadening the concept of sample points and components
in the above proposition, we can reach an exact decomposition of
the Kushnirenko bound in certain cases.
First, we can take into consideration end points of homotopy paths
that are filtered out by the rank condition in
\cref{line: regular filter} of \Cref{alg:unmixed stratified}
(singular sample points)
as well as divergent paths (sample points at toric infinity), %(i.e., paths leaving $(\C^*)^n$)
and count them with proper multiplicity.
Second, we need to include subvarieties of $\V^*(f_1,\ldots,f_n)$
that may or may not be irreducible components into the left-hand side of
\eqref{equ: crude bkk decomposition},
as long as they attract homotopy paths defined by \Cref{alg:unmixed stratified}.
In other words, we need to include ``distinguished'' subvarieties
as constructed in Ref.~\cite{Fulton1998Intersection}.
The full development of this theoretical aspect is outside the scope
of this paper, which focuses on the numerical aspect of this problem.
We will, instead, illustrate the exact decomposition of the BKK bound
through a few concrete examples in \Cref{sec: examples}
(e.g., equation~\eqref{equ: kuramoto c4 bkk}).

%===============================================================================
\section{Examples}\label{sec: examples}

In this section, we present results from numerical experiments
in applying the proposed algorithm to compute sample points
of positive-dimensional $\C^*$-zero sets of some well-known Laurent systems.
All experiments are carried out with a proof-of-concept implementation
that uses \textsf{libDH}~\cite{Chen2021GPU} as the path tracker
which utilizes GPU acceleration.
For a system in $n$ unknowns,
we use the stratified polyhedral homotopy with combined steps
as described in \Cref{sec: combined steps},
which ignores the possibility of $n$-dimensional components.

Internally, calculations, with few exceptions that will be noted below,
are carried out strictly in double-precision floating point numbers,
in order to test the robustness of the proposed numerical algorithm.
Therefore, in the following, words such as ``on'', ``in'', and ``reach''
should be interpreted as points or homotopy paths being sufficiently close to
points or positive-dimensional components
up to a tolerance appropriate for double-precision floating point calculations.
Since the goal is to verify the expected behavior against known solution sets,
no certification of the solutions is performed.

%-------------------------------------------------------------------------------
\subsection{The running example}

In the running example (Example \ref{ex: running})
we considered
\[
    F(x_1,x_2) =
    \begin{cases}
        (x_1^2 + x_2^2 - 9)(x_1 + x_2 - 3) \\
        (x_1^2 + x_2^2 - 9)(x_1 - x_2 - 1).
    \end{cases}
\]
Its $\C^*$-zero set $\V^*(F)$
consists of the 1-dimensional irreducible component $Q = \V^*(x_1^2 + x_2^2 - 9)$
and the nonsingular point $P = (x_1,x_2) = (2,1) \not\in Q$.
Both are (generically) reduced.

It is easy to verify that the convex hull of the union of the supports
is the simplex defined by $\{ (0,0), (3,0), (0,3) \}$,
which has normalized volume of 9.
That is, its Kushnirenko bound is 9.
Therefore, the stratified polyhedral homotopy defines 9 homotopy paths.

\begin{itemize}
    \item
        At the end of the first stage of the homotopy,
        6 paths reach 6 (nonsingular) rank-1 sample points
        (each reached exactly once) inside the 1-dimensional component $Q$.
    \item 
        The remaining 3 paths continue onto the second stage,
        and one of them reaches one (nonsingular) rank-0 sample point,
        which coincides with the only isolated zero $P = (2,1)$.
        The remaining two paths converge to points in $Q$
        or its projective closure.
\end{itemize}

This shows that by following the homotopy paths defined by a single homotopy,
both sample points of the 1-dimensional component
and the isolated zero can be reached.

%-------------------------------------------------------------------------------
\subsection{Algebraic Kuramoto equations on homogeneous networks}

The Kuramoto model \cite{Kuramoto1975Self}
emerged from the study of networks of oscillators,
which can be modeled as collections of points
on the complex plane circling the origin 
while pulling on one another.
Kuramoto proposed a simple yet illuminating dynamical system
governed by
\begin{equation}\label{equ: kuramoto ode}
    \dot{\theta}_i = \omega_i - \sum_{j \sim i} k_{ij} \sin(\theta_i - \theta_j)
    \quad\text{for } i = 0,1,\ldots,N-1.
\end{equation}
Here, $N$ is the number of oscillators,
which are labeled as $i=0,1,\dots,N-1$.
$\theta_i$ is the phase angle of the $i$-th oscillator,
which describes its state,
and $\omega_i$ is its natural frequency (relative to the mean frequency).
$i \sim j$ indicates oscillators $i$ and $j$ are coupled,
in which case the coupling coefficient $k_{ij} = k_{ji}$
quantifies how strongly they influence one another.
Due to the inherent rotational invariance,
we can fix $\theta_0 = 0$,
and discard the equation for $i=0$.

Fundamental to the study of this model
is the problem of finding frequency synchronization configurations,
which are configurations $(\theta_1,\ldots,\theta_{N-1})$ of the network
for which $\dot{\theta}_i = 0$ for all $i = 1,\ldots,N-1$,
i.e., the equilibria of \eqref{equ: kuramoto ode}.
Though the equilibrium equation for \eqref{equ: kuramoto ode}
is not algebraic, with the change of variables
$x_i = e^{\imag \theta_i}$,
the synchronization configurations can be described by the
system of Laurent polynomial equations
\[
    0 = \omega_i - \sum_{i \sim j} \frac{k_{ij}}{2 \imag} \left(
        x_i x_j^{-1} - x_j x_i^{-1}
    \right)
    \quad\text{for } i = 1,\ldots,N-1.
\]
This is the algebraic Kuramoto equation.
Its B\'ezout number and bi-homogeneous B\'ezout number
are $2^{2(N-1)}$ and $\binom{2(N-1)}{N-1}$, respectively
\cite{BaillieulByrnes1982Geometric}.
Its Kushnirenko bound and BKK bound are identical,
and it can be much lower than the B\'ezout numbers for sparse networks.

The network is \emph{homogeneous} if
$\omega_i = 0$ for all $i=0,\ldots,N-1$.
We shall focus on this special case since it was shown in
~\cite{LindbergZachariahBostonLesieutre2022Distribution}
that under the homogeneity assumption,
for specific choices of the coupling coefficients $\{ k_{ij} \}$,
there can be positive-dimensional solution sets.

\subsection{The 4-cycle network}

For a homogeneous network of 4 oscillators that form a 4-cycle,
the corresponding algebraic Kuramoto system is given by
\[
    F_{C_4} =
    \begin{cases}
        - \left( x_1/x_0 - x_0/x_1 \right)
        - \left( x_1/x_2 - x_2/x_1 \right) \\
        - \left( x_2/x_1 - x_1/x_2 \right)
        - \left( x_2/x_3 - x_3/x_2 \right) \\
        - \left( x_3/x_2 - x_2/x_3 \right)
        - \left( x_3/x_0 - x_0/x_3 \right),
    \end{cases}
\]
where $x_0 = 1$ is the constant
that corresponds to the reference phase of the system.
The $\C^*$-zero set $V = \V^*(F_{C_4})$ contains
two nonsingular isolated zeros
$V_0 = \{ (1,1,1), (-1,1,-1) \}$.
There are also three 1-dimensional components
parametrized by the monomial maps
\begin{align*}
        \boldx_1(\lambda) &= \left(-2 \imag \lambda, -1, 2 \imag \lambda\right), &
        \boldx_2(\lambda) &= \left( 2 \imag \lambda, -1, \frac{1}{ 2 \imag \lambda} \right ), &
        \boldx_3(\lambda) &= \left( 1 / 2 \imag \lambda, \frac{-1}{4\lambda^2}, \frac{-1}{2 \imag \lambda} \right),
\end{align*}
respectively.
In addition, there are two embedded points
$E_1 = (-i,-1,i), E_2 = (i,-1,i)$
inside the 1-dimensional components.
Indeed, they are the intersections of $V_{1,1},V_{1,2},V_{1,3}$.
The existence of positive-dimensional components
was discovered by J. Lindberg, A. Zachariah, N. Boston, and B. Lesieutre \cite{LindbergZachariahBostonLesieutre2022Distribution}.
Detailed analysis of the solutions, including their stability properties,
was provided by D. Sclosa \cite{Sclosa2022Kuramoto}.
Here, we utilize this existing knowledge to verify the expected behavior
of the stratified polyhedral homotopy method.

The Kushnirenko bound of this system is 12,
which is identical to its BKK bound
\cite{ChenDavisMehta2018Counting,Chen2019Unmixing}.
Therefore, the stratified polyhedral homotopy defines 12 homotopy paths.

\begin{remark}\label{rmk: bkk vs bezout}
    It is worth noting the significant advantage of the proposed
    stratified polyhedral homotopy method over homotopy methods
    whose complexity is linear in B\'ezout bounds.
    The B\'ezout number of this system is $2^6 = 64$,
    while the bi-homogeneous B\'ezout number is $\binom{6}{3} = 20$.
    The BKK bound is only 12.
    Indeed, as noted in Ref.~\cite{ChenDavisMehta2018Counting},
    the ratio between either B\'ezout number and the BKK bound
    goes to $\infty$ as $N \to \infty$.
\end{remark}

\begin{itemize}
    \item
        At the end of the first stage of the homotopy,
        no rank-2 sample points are produced,
        which signifies that there are no 2-dimensional components
        in the $\C^*$-zero set of this system.
        All 12 paths thus continue to the next stage.

    \item
        At the end of the second stage,
        6 of the 12 paths reach 6 (nonsingular) rank-1 sample points
        inside the 1-dimensional components,
        two sample points on each of the components $V_{1,1},V_{1,2},V_{1,3}$.
        The remaining 6 paths continue to the next stage.

    \item
        At the end of the third stage,
        2 of the 6 remaining paths converge to the two
        nonsingular isolated zeros $(1,1,1)$ and $(-1,1,-1)$, respectively.
        The rest of the paths converge to two of the
        singular points $E_1$ and $E_2$
        (each reached twice).
\end{itemize}
In this case, there are no divergent paths
(i.e., no paths escape $(\C^*)^3$),
and thus, by counting the number of paths reaching each component,
including the two singular points,
we have a full decomposition of the BKK bound
into the local contributions from 7 components:
\begin{equation}\label{equ: kuramoto c4 bkk}
    \mvol(\mathcal{N}_1,\mathcal{N}_2,\mathcal{N}_3) =
    3! \evol_3(\conv(\mathcal{N}_1 \cup \mathcal{N}_2 \cup \mathcal{N}_3)) = 12
    = \underbrace{2 + 2 + 2}_{\substack{\text{1-dimensional}\\\text{components}}}
    + \underbrace{1 + 1    }_{\substack{\text{Isolated}\\\text{points}}}
    + \underbrace{2 + 2    }_{\substack{\text{Singular}\\\text{points}}},
\end{equation}
where $\mathcal{N}_1,\mathcal{N}_2,\mathcal{N}_3$
are the Newton polytopes of three equations, respectively.
This shows that the bound given in \eqref{equ: crude bkk decomposition}
may become an equality when all ``distinguished'' components
are taken into consideration,
thereby providing an exact decomposition of the BKK bound.

%-------------------------------------------------------------------------------
\subsection{The 6-cycle network}

Similar to the formulation above,
the algebraic Kuramoto system for the 6-cycle graph
contains 5 equations in 5 complex variables.
It is shown in Ref.~\cite{ChenKorchevskaiaLindberg2022Typical}
that by picking coupling coefficients $k_{ij} = \pm s$
for some $s \in \C^*$ with an odd number of negative choices,
the resulting Laurent system has 10 different 1-dimensional components,
each having a monomial parametrization similar to those given above.
Here, we choose $k_{ij} = 1$ for $\{i,j\} \ne \{0,1\}$
and $k_{01} = k_{10} = -1$.
The corresponding Laurent system is
\[
    F(x_1,x_2,x_3,x_4,x_5) = \left\{
        \begin{aligned}
            +(x_1/x_0 - x_0/x_1) - (x_1/x_2 - x_2/x_1) \\
            -(x_2/x_1 - x_1/x_2) - (x_2/x_3 - x_3/x_2) \\
            -(x_3/x_2 - x_2/x_3) - (x_3/x_4 - x_4/x_3) \\
            -(x_4/x_3 - x_3/x_4) - (x_4/x_5 - x_5/x_4) \\
            -(x_5/x_4 - x_4/x_5) - (x_5/x_0 - x_0/x_5),
        \end{aligned}
    \right.
\]
where $x_0 = 1$ corresponds to the reference phase as before.
% Let $S$ be the union of the supports.
The Kushnirenko bound of this system is
% Then the normalized volume of the convex hull of $S$ equals
$6 \cdot \binom{6-1}{\lfloor (6-1)/2 \rfloor} = 60$.
Therefore, the stratified polyhedral homotopy defines 60 paths
(in contrast with the B\'ezout number of 1024
or the bi-homogeneous B\'ezout number of 252).

\begin{itemize}
    \item
        No rank-$d$ sample points are produced for all $d > 1$.

    \item
        20 paths reach 20 (nonsingular) rank-1 sample points
        on the 1-dimensional components
        with two sample points on each component.

    \item
        The remaining 40 homotopy paths continue on
        and they reach isolated zeros of $F$
        as well as singular points in positive-dimensional components.
\end{itemize}

Together, these results provide numerical verifications of the results
developed in Ref.~\cite{ChenKorchevskaiaLindberg2022Typical}.
Indeed, they provide strong numerical evidence suggesting that
the positive-dimensional components described in
\cite[Proposition 5.2]{ChenKorchevskaiaLindberg2022Typical}
are the only positive-dimensional components.

%-------------------------------------------------------------------------------
\subsection{Nested distinguished components}

In Ref.~\cite{BatesEklundHauensteinPeterson2021Excess},
the polynomial system
\[
    F(x,y,z) =
        \begin{cases}
            (xy - z)(x - y)(x + y - z) \\
            (xy - z)(xy - z + (x - y)(x + 2y - 3z)) \\
            (xy - z)(xy - z + (x - y)(2x - 3y + z))
        \end{cases}
\]
is used as an example.
The $\C^*$-zero set of $F$ consists of
a quadratic surface $Q = \V^*(xy - z)$
and the isolated point $P = (2/11 , 10/11, 12/11) \not\in Q$.
There is also a distinguished 1-dimensional component
$C = \V^*(x-y,xy-z)$ that is contained in $Q$.
Let $S_1,S_2,S_3$ be the supports of the three Laurent polynomials in this system,
then the Kushnirenko bound is $3! \evol_3 (\conv(S_1 \cup S_2 \cup S_3)) = 12$.
Therefore, the stratified polyhedral homotopy method
defines 12 paths.

\begin{itemize}
    \item
        At the end of the first stage,
        11 paths converge to points in $Q$.
        However, not all of them produce nonsingular rank-2 sample points.
        Among them, two pairs of paths converge to two points in $C$
        (each reached twice).
    \item 1 path continues on and converges to $P$.
\end{itemize}

The important observation is that the existence of such
a nested distinguished component does not prevent
the stratified polyhedral homotopy from reaching nonsingular
sample points for the 2-dimensional component
and the isolated zero.
Indeed, such a 1-dimensional distinguished component
contained inside a 2-dimensional distinguished component
can still be sampled, if we take into consideration
the singular sample points.

%-------------------------------------------------------------------------------
\subsection{Cyclic-4 system}

The ``Cyclic-$n$'' family \cite{Backelin1989Square} of polynomial systems
has been used as standard test cases relating to solving polynomial systems.
Among this family, the ``Cyclic-4'' system is the smallest system
that has a positive-dimensional zero set.
It is given by
\[
    F(x_1,x_2,x_3,x_4) =
        \begin{cases}
            x_1 + x_2 + x_3 + x_4 \\
            x_1 x_2 + x_2x_3 + x_3x_4 + x_4x_1 \\
            x_1 x_2 x_3 + x_2 x_3 x_4 + x_3 x_4 x_1 + x_4 x_1 x_2 \\
            x_1 x_2 x_3 x_4 - 1.
        \end{cases}
\]
Its $\C^*$-zero set consists of two one-dimensional components
as well as 8 embedded points.
The Kushnirenko bound is 22.
Therefore, the stratified polyhedral homotopy defines 22 paths.
\begin{itemize}
    \item
        No (nonsingular) rank-$d$ sample points are produced for $d > 1$.
        This agrees with the fact that there are no components
        in $\V^*(F)$ of dimension greater than 1.

    \item
        At least 4 (nonsingular) rank-1 sample points are produced,
        two on each of the 1-dimensional components.
        In addition, 2 paths reach end points
        that are numerically singular
        (the condition number of $DF_{\square}^{1}$ exceeds $10^6$).

    \item
        No (nonsingular) rank-0 sample point is produced.
        But 16 paths reach the 8 singular embedded points
        of $\V^*(F)$. Each is reached twice.
\end{itemize}

\noindent
This gives a clear demonstration of the strength of the proposed method.
When the classical polyhedral homotopy is applied directly to solve this system,
only the 8 embedded points are reached,
which are singular zeros of $\V^*(F)$.
In contrast, the stratified polyhedral homotopy
produces nonsingular sample points
on each curve in $\V^*(F)$,
which can be used as input for higher level algorithms
(e.g., irreducible decomposition, as noted in \Cref{sec: irreducible decomposition}).

%===============================================================================
\section{Concluding remarks}\label{sec: conclusion}

The proposed stratified polyhedral homotopy method computes
a special type of sample points for all reduced irreducible components
of the $\C^*$-zero sets of a Laurent polynomial system.
More specifically, when applied to a Laurent polynomial system $F$
in $n$ complex variables,
the proposed homotopy defines a finite number of piecewise smooth
homotopy paths in $(\C^*)^n$ (or a suitable compactification of it)
that pass through finite sample sets
$W_n, W_{n-1},\ldots,W_1,W_0$ (which may be empty)
such that $W_d$ contains at least one point
from each $d$-dimensional reduced irreducible component of
the $\C^*$-zero set of $F$.
Moreover, such sample points are smooth points
in the sense that
the nullity of the Jacobian matrix of $F$ at these sample points matches
the local dimensions of the components there.
This smoothness property is important,
as it enables these sample points to generate
additional information about the $\C^*$-zero set of $F$
through higher level algorithms in numerical algebraic geometry.
We conclude with a few remarks on 
these higher-level algorithms
that can use sample points produced by
the proposed stratified polyhedral homotopy as input.

\subsection{From sample sets to irreducible decomposition}\label{sec: irreducible decomposition}

At each iteration of \cref{line: sample set} of
\Cref{alg:unmixed stratified},
a finite set of points $W_d$ is produced.
Collectively, they form a numerically well-behaved representation
of the $d$-dimensional components $V_d$ of the $\C^*$-zero set $\V^*(F)$.
Therefore, the production of the sample sets $W_n,W_{n-1},\dots,W_1,W_0$
is numerically equivalent to decomposing $\V^*(F)$
according to the dimensions of its components.
A more refined decomposition is the \emph{irreducible decomposition}.
In particular, the $d$-dimensional component $V_d$
may be further decomposed into its irreducible components
\[
    V_d = V_{d,1} \cup V_{d,2} \cup \cdots \cup V_{d,m_d}.
\]
Under the assumption that these components are reduced,
the numerical counterpart to this decomposition will be
a partition of the rank $d$ sample set $W_d$
\[
    W_d = W_{d,1} \cup W_{d,2} \cup \cdots \cup W_{d,m_d}
\]
such that $W_{d,i} \subset V_{d_i}$ for each $i=1,\ldots,m_d$.
In principle, this partition may be produced through
a \emph{monodromy} algorithm \cite{SommeseVerscheldeWampler2001Monodromy}.
The effectiveness and efficiency of such an approach
will be important questions for future studies.

\subsection{Sampling nonreduced components}

This paper has focused only on (generically) reduced components.
In general, the $\C^*$-zero set of a Laurent system $F$
may contain nonreduced components.
That is, over a component $V$ of the zero set,
it is possible for the Jacobian matrix $DF$
to have a nullity that is strictly greater than
the dimension of a component $V$ at every point.
Such nonreduced components may result in
isolated but singular end points in the set $X_k$
in \cref{line: raw end points} of \Cref{alg:unmixed stratified}.
These points are filtered out in \cref{line: regular filter}.
Consequently, the proposed algorithm simply ignores
the existence of nonreduced components.

The main reason for ignoring such nonreduced components
is that singular end points
in \cref{line: raw end points} of \Cref{alg:unmixed stratified}
(i.e., points in $X_k \setminus \tilde{X}_k$)
may become start points of ``singular'' homotopy paths
in the homotopy continuation step in \cref{line: raw end points}
for which \emph{basic} path tracking algorithms cannot be applied.

While it is possible to apply more advanced algorithms
to tracking such ``singular'' homotopy paths
\cite{SommeseVerscheldeWampler2002Method}
and potentially reach singular sample points
that serve as numerical representations of certain nonreduced components,
within the numerical algebraic geometry community, however,
it is much preferred to replace the equations
that define the same zero set
so that the nonreduced structure on the zero set disappears.
These are special forms of regularization processes.
The most commonly used is a family of closely related
symbolic preprocessing steps collectively known as \emph{deflation}
\cite{DaytonZeng2005Computing,LeykinVerscheldeZhao2006Newton}.
Combining the algorithm proposed here
with deflation steps will be a natural extension
that should be investigated.

%===============================================================================
\section*{Acknowledgement}

The author is grateful to Taylor Brysiewicz
for pointing out Maurice Rojas's theorems on the monotonicity of the mixed volume,
an oversight in the author's previous work.
He also thanks Frank Sottile for clarifying the connection to excess intersections.

%===============================================================================

\bibliographystyle{siamplain}
\bibliography{library}

%===============================================================================
\appendix
\begin{appendices}

\section{Bootstrapping unmixed polyhedral homotopy}\label{appendix: bootstrapping}
For completeness, we briefly outline, without proofs,
the main procedure for computing the starting points
for the homotopy \eqref{equ:cascade} 
in \Cref{def: stratified polyhedral homotopy},
which are the nonsingular isolated zeros of $H(\boldx,(1,\ldots,1))$.
Without loss of generality, it is sufficient to assume
$F$ is an unmixed square system of $n$ Laurent polynomials in $n$ variables.
Under the genericity assumption for $\omega$,
the projection of the lower hull of 
$\hat{S} = \{ (\bolda,\omega(\bolda)) \mid \bolda \in S \} \subset \Q^{n+1}$
forms a triangulation of $S$.
Let 
\[
    T = \{
        \boldalpha \in \Q^n
        \mid
        (\boldalpha,1) \text{ is an inner normal of a facet of } \hat{S}
    \}.
\]
Then for each $\boldalpha \in T$,
the minimum of the linear functional $\inner{ \bullet }{ (\boldalpha,1) }$
is achieved at a subset $\hat{\Delta}(\boldalpha) \subset \hat{S}$
consisting of exactly $n+1$ points.
Let $\Delta(\boldalpha) \subset S$ be its projection.
Since the columns in the support matrix $A$ and the coefficient matrix $C$
(as in \Cref{rmk: square system})
correspond to points in $S$,
we shall use the notations $A_{\Delta(\boldalpha)}$ and $C_{\Delta(\boldalpha)}$
for the submatrices of $A$ and $C$, respectively,
consisting of columns corresponding to points in $\Delta(\boldalpha)$.
With these, we define
\begin{equation}
    F^{(\boldalpha)}(\boldx) =
    C_{\Delta(\boldalpha)} \, (\mathbf{x}^{A_{\Delta(\boldalpha)}})^\top,
\end{equation}
which is a square system of $n$ Laurent polynomials
each of which is a linear combination of the same $n+1$ monomials.
In Ref.~\cite{LeykinVerscheldeZhuang2006Parallel},
A. Leykin, J. Verschelde, and Y. Zhuang named such a system a ``\emph{simplex system}'',
since its Newton polytope is a simplex.
The numerical issues involved in solving such a system are analyzed in the same article,
and more details are included in the Ph.D. thesis of Y. Zhuang~\cite{Zhuang2007Thesis}.
Through a toric transformation induced by the vector $\boldalpha$,
the solutions to such a simplex system
can be used as numerical approximations
for the starting points of the homotopy paths
for \Cref{alg:unmixed stratified}.

\end{appendices}

\end{document}